\newtheorem{theorem}{Theorem}
\newtheorem{definition}{Definition}
\newtheorem{proposition}{Proposition}
\newtheorem{lemma}{Lemma}
\newtheorem{remark}{Remark}
\newcommand{\fonction}[5]{\begin{array}[t]{lrcl} #1 : & #2 &\rightarrow & #3 \\ & #4 & \mapsto & #5 \end{array}}
\DeclareMathOperator*{\argmax}{arg\,max}
\def\bs{\backslash}
\def\di{\displaystyle}
\def\eps{\varepsilon}
\def\t{\tau}
\def\qqquad{\qquad \quad}
\def\R{\mathbb{R}}
\def\N{\mathbb{N}}
\def\T{\mathbb{T}}
\def\NN{\mathrm{N}}
\def\B{\mathrm{B}}
\def\BB{\overline{\B}}
\def\C{\mathrm{C}}
\def\D{\mathrm{D}}
\def\AC{\mathrm{AC}}
\def\PC{\mathrm{PC}}
\def\L{\mathrm{L}}
\def\U{\mathrm{U}}
\def\E{\mathrm{E}}
\def\AA{\mathcal{A}}
\def\DD{\mathcal{D}}
\def\VV{\mathcal{V}}
\def\UU{\mathcal{U}}
\def\UUadm{\mathcal{U}_{\mathrm{adm}}}
\def\PP{\mathcal{P}}
\def\SS{\mathcal{S}}
\def\HH{\mathcal{H}}
\def\KK{\mathcal{K}}
\def\CC{\mathcal{C}}
\begin{document}

\title{Convergence in nonlinear optimal sampled-data control problems}

\author{Lo\"ic Bourdin and Emmanuel Tr\'elat
\thanks{L. Bourdin is with University of Limoges, XLIM Research Institute, F-87000 Limoges, France, loic.bourdin@unilim.fr.}
\thanks{E. Tr\'elat is with Sorbonne Universit\'e, CNRS, Universit\'e de Paris, Inria, Laboratoire Jacques-Louis Lions (LJLL), F-75005 Paris, France, emmanuel.trelat@sorbonne-universite.fr.}
}

\date{}
\maketitle

\begin{abstract}
Consider, on the one part, a general nonlinear finite-dimensional optimal control problem and assume that it has a unique solution whose state is denoted by~$x^*$. 
On the other part, consider the sampled-data control version of it. Under appropriate assumptions, we prove that the optimal state of the sampled-data problem converges uniformly to $x^*$ as the norm of the corresponding partition tends to zero. Moreover, applying the Pontryagin maximum principle to both problems, we prove that, if $x^*$ has a unique weak extremal lift with a costate $p$ that is normal, then the costate of the sampled-data problem converges uniformly to $p$.
In other words, under a nondegeneracy assumption, control sampling commutes, at the limit of small partitions, with the application of the Pontryagin maximum principle.
\end{abstract}

\paragraph{Keywords:}
sampled-data control, Pontryagin maximum principle, convergence, Filippov approach.


\section{Introduction}

Optimal control theory is a topic of mathematics seeking the best possible action (or \textit{control}) for steering a given dynamical system to a final configuration, while optimizing a given criterion. The milestone is certainly the Pontryagin maximum principle~\cite{pontryagin1962} which establishes a first-order necessary condition for optimality: if a trajectory is optimal, then it must be the projection onto the state space of a so-called \textit{extremal}, consisting of the state but also of a costate satisfying an adjoint equation. Actually, state and costate satisfy Hamiltonian equations, and moreover the optimal control maximizes pointwisely the Hamiltonian along the extremal. In general this maximization condition leads to express the optimal control as a function of the state and the costate.
In this classical situation where the controls are measurable functions of the time $t$ (possibly subject to some constraints), we speak of \emph{permanent controls}, in the sense that the control value can be modified at any time.

Of course, permanent controls are a mathematical model which expresses an idealized situation. In practice, when acting on concrete models or even with a computer, one obviously cannot act at any time, because controls are digital or may be frozen over a certain (even small) horizon of time. We speak then of \emph{nonpermanent controls}. A situation of interest, which has been much considered in the literature, is that of \emph{sampled-data controls}, standing for functions that can be modified only at the so-called \emph{sampling times} of a given subdivision of the interval of time (see, e.g., \cite{acker, fada, Iser, land, nesi, raga}).

Optimal sampled-data control theory has been developed in a number of contributions, such as \cite{azhm, bini2009, bini2014, chen, levis1971, souz, toiv, tou}, but this is only recently that the Pontryagin maximum principle has been established in full generality for optimal sampled-data control problems (see \cite{BT2015}, see also~\cite{BT2013,BT2016} for versions on time scales).
As for fully discrete optimal control problems, the maximization condition fails in general and must be replaced with a weaker condition. This difference with the permanent case creates a kind of non-uniformity that causes difficulties in the following expected convergence.

It is natural to expect that, as the norm of the partition (maximal distance between two successive sampling times) tends to zero, the optimal sampled-data control problem converges in some sense to the optimal permanent control problem. In this paper, our objective is to establish this kind of~$\Gamma$-convergence property to a wide extent. Under appropriate assumptions, not only we prove that the optimal state of the sampled-data problem converges uniformly to the optimal state of the permanent problem, but we also establish the uniform convergence of the costates coming from the application of the Pontryagin maximum principle to both problems. The latter property is particularly important in view of justifying strong convergence of the controls, and in view of initializing successfully numerical methods.

Similar convergence results have been obtained in~\cite{BT2017} in the
unconstrained linear-quadratic context. In the present work, we investigate the fully general nonlinear case, under control and terminal state constraints. This framework is significantly more involved and leads us to develop a Filippov-type approach.

\section{Framework and preliminaries}\label{sec2}
Throughout the paper, two positive integers $m$, $n \in \N^*$ are fixed, as well as a positive real number $T > 0$. We denote by:
\begin{itemize}
\item[--] $\C := \C( [0,T],\R^n)$ the Banach space of continuous functions defined on $[0,T]$ with values in $\R^n$, endowed with the uniform norm $\Vert \cdot \Vert_{\C}$;
\item[--] $\AC := \AC( [0,T],\R^n)$ the subspace of $\C$ of absolutely continuous functions;
\item[--] $\L^r := \L^r( [0,T],\R^m)$ the Lebesgue space of power $r$ integrable functions defined on $[0,T]$ with values in~$\R^m$, endowed with the norm $\Vert \cdot \Vert_{\L^r}$, for any~$r\in[1,+\infty]$.
\end{itemize}
A {partition} of the interval $[0,T]$ is a set $\T = \{ t_i \}_{i=0,\ldots,N}$ of real numbers satisfying $ 0 = t_0 < t_1 < \cdots < t_{N-1} < t_N = T $, where $N \in \N^*$ is a positive integer, and the norm of~$\T$ is defined by~$ \Vert \T \Vert := \max_{i=0,\ldots,N-1} \vert t_{i+1} - t_i \vert $. In the sequel $\PP$ stands for the set of all partitions of the interval~$[0,T]$.

The set $\PC^\T := \PC^\T( [0,T],\R^m)$ of all piecewise constant functions defined on~$[0,T]$ with values in $\R^m$, according to a partition $\T = \{ t_i \}_{i=0,\ldots,N} \in \PP$, is defined by 
\begin{multline*}
\PC^\T := \{ u \in \L^\infty \mid \forall i \in \{ 0,\ldots,N-1 \}, 
\qquad \exists u_i \in \R^m, \; u(t)=u_i \text{ a.e.\ } t \in [t_i,t_{i+1}) \}. 
\end{multline*}
Given any subset $\SS$ of $\R^m$, we denote by $\L^r_\SS$ (resp., $\PC^\T_\SS$) the subset of~$\L^r$ (resp., $\PC^\T$) of functions with values in $\SS$. Finally we denote by~$\langle \cdot , \cdot \rangle_n$ the Euclidean scalar product in~$\R^n$.

\subsection{Optimal permanent control problem}\label{secOCP}
Let $x_0$, $x_T \in \R^n$ be arbitrary (terminal conditions) and let $\U$ be a nonempty closed convex subset of~$\R^m$ (control constraint set).
We consider the optimal permanent control problem
\begin{equation}\label{theproblem}\tag{OCP}
\begin{array}{ll}
\text{min} & \CC (x,u) := \di \int_0^T L (x(s),u(s),s) \; ds  \\
& \\[-7pt]
\text{s.t.} & \left\lbrace \begin{array}{l}
x \in \AC , \; u \in \L^\infty_\U , \\[3pt]
\dot{x}(t) = f ( x(t),u(t),t ) \text{ a.e.\ } t \in [0,T] , \\[3pt]
x(0) = x_0 , \; x(T) = x_T ,
\end{array} \right.
\end{array} 
\end{equation}
where $x$ is the state and $u$ is the control.
We assume that the {dynamics} $f : \R^n \times \R^m \times [0,T] \to \R^n$ and the {Lagrange function} $L : \R^n \times \R^m \times [0,T] \to \R$ are of class~$\C^2$. In~\eqref{theproblem}, the control is said to be \emph{permanent}, in the sense that its value can be modified at any real time~$t \in [0,T]$. A pair~$ (x,u )$ is said to be \emph{admissible} for~\eqref{theproblem} if it satisfies all the constraints of~\eqref{theproblem}. Finally we denote by~$\AA$ the set of all admissible pairs for~\eqref{theproblem}.

The epigraph of the extended velocity set is defined by
$$ \VV (x,t) := \{ (f(x,u,t),L(x,u,t)+\gamma) \mid (u,\gamma) \in \U \times \R_+ \} $$
for all $(x,t) \in \R^n \times [0,T]$. According to the Filippov theorem~\cite{filippov1959}, if $\AA \neq \emptyset$, under the {compactness assumption} 
\begin{equation}\label{eqhcomp}\tag{$\mathrm{H}^{\textrm{comp}}$}
\left\lbrace \begin{array}{l}
\U \text{ is compact}, \\[3pt]
\exists R > 0, \; \forall (x,u) \in \AA, \; \Vert x \Vert_{\C} \leq R,
\end{array} \right.
\end{equation}
and under the convexity assumption
\begin{equation}\label{eqhconv}\tag{$\mathrm{H}^{\textrm{conv}}$}
\forall (x,t) \in \R^n \times [0,T], \; \VV (x,t) \text{ is convex}, 
\end{equation}
then \eqref{theproblem} has at least one solution (see~\cite{Cesari,trelat2005} and Appendix~\ref{appfilippov}).

The \emph{Hamiltonian}~$H : \R^n \times \R^m \times \R^n \times \R \times [0,T] \to \R$ associated with \eqref{theproblem} is defined by
$$ H(x,u,p,p^0,t) := \langle p,f(x,u,t) \rangle_{n} +p^0 L (x,u,t) $$
for all $(x,u,p,p^0,t) \in \R^n \times \R^m \times \R^n \times \R \times [0,T]$. Given~$(x,u) \in \AA$, a nontrivial pair $(p,p^0) \in \AC \times \R_-$ is said to be a \emph{weak extremal lift} of $(x,u)$ if the adjoint equation 
\begin{equation}\tag{AE}\label{eqAE}
\!\!\! \dot{p}(t) = - \nabla_x f (x(t),u(t),t)^\top p(t) 
- p^0 \nabla_x L (x(t),u(t),t)
\end{equation}
and the Hamiltonian gradient condition 
\begin{equation}\tag{HG}\label{eqHG}
 \nabla_u H(x(t),u(t),p(t),p^0,t) \in \NN_\U [u(t)] 
\end{equation}
are satisfied for almost every $t \in [0,T]$, where $\NN_\U [u(t)]$ is the normal cone to $\U$ at~$u(t)$. A nontrivial pair $(p,p^0) \in \AC \times \R_-$ is said to be a \emph{strong extremal lift} of~$(x,u)$ if \eqref{eqAE} and the Hamiltonian maximization condition (stronger than \eqref{eqHG}) 
\begin{equation}\tag{HM}\label{eqHM}
u(t) \in \argmax_{\omega \in \U} H(x(t),\omega,p(t),p^0,t) 
\end{equation}
are satisfied for almost every $t \in [0,T]$. 

\begin{remark}\label{rempermanent}
Let $(p,p^0)$ be a (weak or strong) extremal lift of a pair~$(x,u)\in\AA$. Recall that~$p$ is usually called costate (or adjoint vector), and that 
\begin{equation*}
\left\lbrace \begin{array}{l}
\dot{x}(t) = \nabla_p H (x(t),u(t),p(t),p^0,t), \\[3pt]
\dot{p}(t) = - \nabla_x H (x(t),u(t),p(t),p^0,t),
\end{array} \right.
\end{equation*}
for almost every $t \in [0,T]$. The pair~$(p,p^0)$ is defined up to (constant in time) positive scaling. In the normal case~$p^0 \neq 0$, it is standard to normalize the pair~$(p,p^0)$ so that $p^0 = - 1$. In the abnormal case~$p^0 = 0$, by homogeneity of~\eqref{eqAE} and Cauchy uniqueness, we must have $p(T) \neq 0_{\R^n}$.
\end{remark}

According to the Pontryagin maximum principle~\cite{pontryagin1962}, if~$(x^*,u^*)$ is a solution to \eqref{theproblem}, then it has at least one strong extremal lift~$(p,p^0)$.

\begin{remark}
Recall that a weak extremal lift does not necessarily coincide with a strong extremal lift (even if it corresponds to a solution to \eqref{theproblem}). Let us be more precise.
Of course, any strong extremal lift of an admissible pair is also a weak extremal lift.
The converse is true if $H$ is concave with respect to $u$, but fails to be true in general.
As a counterexample, take~$T=n=m=1$, $x_0 = x_T = 0$, $\U = [-1,1]$, $f(x,u,t) = u^3$ and $L (x,u,t) = 0$ for all $(x,u,t) \in \R \times \R \times [0,1]$. Then consider~$p^0 \in \{ -1 , 0 \}$, $x(t)=u(t)=0$ and~$p(t)=1$ for every~$t \in [0,1]$. Then $(p,p^0)$ is a weak extremal lift of the admissible pair~$(x,u)$ but is not a strong extremal lift. Note that the admissible pair~$(x,u)$ is a solution to~\eqref{theproblem} and that the weak extremal lift can be either normal or abnormal. We refer to~\cite{BT2021} for a detailed discussion.
\end{remark}

\subsection{Optimal sampled-data control problems}\label{secOCPT}

Given any partition $\T = \{ t_i \}_{i=0,\ldots,N} \in \PP$, we consider the optimal sampled-data control problem
\begin{equation}\label{thesampledproblem}\tag{OCP${}_\T$}
\begin{array}{rl}
\text{min} &\CC (x,u) := \di \int_0^T L (x(s),u(s),s) \; ds \\
& \\[-7pt]
\text{s.t.} & \!\!\! \left\lbrace \begin{array}{l}
x \in \AC, \; u \in \PC^\T_\U, \\[3pt]
\dot{x}(t) = f ( x(t),u(t),t ) \text{ a.e.\ } t \in [0,T], \\[3pt]
x(0) = x_0 , \; x(T) = x_T.
\end{array} \right.
\end{array} 
\end{equation}
In contrast to \eqref{theproblem}, the control in \eqref{thesampledproblem} is \emph{nonpermanent}, in the sense that its value can be modified only at the sampling times~$t_i$. In what follows, we denote by~$\AA_\T$ the set of all admissible pairs for~\eqref{thesampledproblem}. Note that~$\AA_\T \subset \AA$.

Since $\PC^\T$ is a finite-dimensional subspace of~$\L^\infty$, the following Filippov-type existence result does not require any convexity assumption, in contrast to the permanent control case (since we have not found this result in the literature, a proof is provided in Appendix~\ref{appmainresults}).

\begin{proposition}\label{propsampledfilippov}
If $\AA_\T \neq \emptyset$ for some~$\T \in \PP$, under \eqref{eqhcomp}, \eqref{thesampledproblem} has at least one solution. 
\end{proposition}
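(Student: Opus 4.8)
The plan is to exploit the finite-dimensionality of $\PC^\T$ in order to run a direct (Weierstrass-type) minimization argument, the key structural point being that a sequence of sampled-data controls enjoys \emph{strong} convergence rather than merely weak-$*$ convergence. Indeed, once a partition $\T = \{ t_i \}_{i=0,\ldots,N}$ is fixed, every control $u \in \PC^\T_\U$ is entirely encoded by the finite family of its values $(u_0,\ldots,u_{N-1}) \in \U^N$, and under \eqref{eqhcomp} the set $\U^N$ is compact. This is precisely what makes the convexity assumption \eqref{eqhconv} superfluous: contrary to the permanent case, where one can only extract a weak-$*$ limit in $\L^\infty_\U$ and must then invoke the convexity of the velocity set $\VV$ to pass to the limit in the dynamics, here the limiting control is produced by genuine convergence in the compact set $\U^N$.

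First I would fix $\T \in \PP$ with $\AA_\T \neq \emptyset$ and observe that $\CC$ is bounded below on $\AA_\T$: by \eqref{eqhcomp} every admissible state satisfies $\Vert x \Vert_\C \leq R$, so the integrand $L(x(s),u(s),s)$ ranges in the image of the compact set $\BB(0_{\R^n},R) \times \U \times [0,T]$ under the continuous map $L$, which is bounded. I would then pick a minimizing sequence $(x_k,u_k)_{k\in\N} \subset \AA_\T$, write each $u_k$ through its values $(u_{k,i})_{i}\in \U^N$, and use the compactness of $\U^N$ to extract a subsequence (not relabeled) along which $(u_{k,i})_i$ converges to some $(u^*_i)_i \in \U^N$. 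Denoting by $u^* \in \PC^\T_\U$ the associated piecewise constant control, this yields $u_k(t) \to u^*(t)$ for almost every $t \in [0,T]$ (all $t$ except the finitely many sampling times).

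Next I would handle the states. Since $\Vert x_k \Vert_\C \leq R$ and $u_k$ takes values in the compact set $\U$, the velocities $\dot{x}_k(t) = f(x_k(t),u_k(t),t)$ are bounded, uniformly in $k$, by the supremum $M$ of $\vert f \vert$ over $\BB(0_{\R^n},R) \times \U \times [0,T]$; hence $(x_k)_k$ is uniformly bounded and $M$-Lipschitz, so by the Arzel\`a--Ascoli theorem a further subsequence converges uniformly to some $x^* \in \C$. Passing to the limit in the integral formulation $x_k(t) = x_0 + \int_0^t f(x_k(s),u_k(s),s)\,ds$ by dominated convergence -- the integrands converge almost everywhere to $f(x^*(s),u^*(s),s)$ by continuity of $f$ and are dominated by $M$ -- shows that $x^* \in \AC$ solves $\dot{x}^* = f(x^*,u^*,\cdot)$ with $x^*(0) = x_0$, while the uniform convergence gives $x^*(T) = \lim_k x_k(T) = x_T$. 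Thus $(x^*,u^*) \in \AA_\T$. Applying the same dominated convergence argument to $L$ yields $\CC(x_k,u_k) \to \CC(x^*,u^*)$, and since the left-hand side tends to $\inf_{\AA_\T}\CC$ we conclude that $(x^*,u^*)$ is a solution.

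I do not expect a genuine obstacle here: the entire difficulty of the permanent case -- closing the dynamics and the cost under only weak convergence of controls, which forces the convexity hypothesis \eqref{eqhconv} -- evaporates because finite-dimensionality upgrades the control convergence to the almost-everywhere convergence used above. The only points requiring a little care are the uniform state bound and the equi-Lipschitz estimate, both furnished directly by \eqref{eqhcomp}, together with the routine verification that dominated convergence applies.
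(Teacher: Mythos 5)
Your argument is correct, and it rests on the same decisive observation as the paper's proof: since $\PC^\T$ is finite-dimensional and $\U$ is compact, a minimizing sequence of sampled-data controls admits a subsequence converging \emph{strongly} (in $\L^\infty$, hence almost everywhere), which is exactly why the convexity assumption~\eqref{eqhconv} becomes superfluous. Where you genuinely differ is in how the dynamics and the cost are closed under this convergence. The paper routes everything through its truncation machinery: it replaces $f$ and $L$ by the cut-off versions $f^R$, $L^R$, invokes the $\L^1$-norm continuity of the truncated input--output maps $\E^R$, $\E^R_T$ and of $\KK^R$ (Proposition~\ref{propinputoutputR} combined with Remark~\ref{remtruncated}), and removes the truncation a posteriori. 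You instead argue from first principles: the uniform state bound and the equi-Lipschitz estimate furnished by~\eqref{eqhcomp} give uniform convergence of the states via Arzel\`a--Ascoli, and dominated convergence passes to the limit in both the integral form of the state equation and the cost. Both routes are sound. Yours is more self-contained and elementary if one only cares about this proposition; the paper's choice amortizes lemmas that are needed anyway in the proof of Theorem~\ref{thmcv}, where the same $\L^1$-continuity of $\E^R$, $\E^R_T$, $\KK^R$ is reused for sequences of controls attached to varying partitions. A minor point: your appeal to compactness of $\U^N$ uses the compactness of $\U$ assumed in~\eqref{eqhcomp}; as the paper remarks after its proof, the statement survives when~\eqref{eqhcomp} is weakened to a mere bound on $\Vert x \Vert_\C + \Vert u \Vert_{\L^\infty}$ over $\AA_\T$, and your argument adapts at once since the control values then range in the compact set $\U \cap \BB_{\R^m}(0_{\R^m},R)$.
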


Given a pair~$(x,u) \in \AA_\T$, a nontrivial pair $(p,p^0) \in \AC \times \R_-$ is said to be a \emph{$\T$-averaged weak extremal lift} of~$(x,u)$ if it satisfies the adjoint equation~\eqref{eqAE} and the $\T$-averaged Hamiltonian gradient condition
\begin{equation}\tag{AHG${}_\T$}\label{eqAHG}
 \int_{t_i}^{t_{i+1}}  \nabla_u H(x(s),u_i,p(s),p^0,s) \; ds \in \NN_\U [u_i]
\end{equation}
for every $i \in \{ 0,\ldots,N-1 \}$. Remark~\ref{rempermanent} applies as well to the concept of $\T$-averaged weak extremal lift.

According to the Pontryagin maximum principle obtained recently in~\cite{BT2015,BT2016}, if~$(x^*_\T,u^*_\T)$ is a solution to \eqref{thesampledproblem}, then it has at least one $\T$-averaged weak extremal lift $(p_\T,p^0_\T)$.

\section{Main result and comments}\label{subsecmainresults}
The objective of this paper is to establish convergence of solutions to \eqref{thesampledproblem} to solutions to \eqref{theproblem} as~$\Vert \T \Vert\rightarrow 0$. 

Problem~\eqref{theproblem} (resp., Problem~\eqref{thesampledproblem} for some~$\T \in \PP$) can be formulated as the problem of minimizing the functional~$\CC$ over~$\AA$ (resp., $\AA_\T$). Of course, we have assume that $\AA\neq\emptyset$, i.e., that the target point $x_T$ is reachable from the initial point~$x_0$ with a (permanent) $\L^\infty_\U$-control. A first fundamental question is to know whether, for $\Vert \T \Vert$ small enough, one has $\AA_T\neq\emptyset$, i.e., whether $x_T$ can be reached from $x_0$ with a (sampled-data) $\PC^\T_\U$-control. 
This question happens to be more difficult than expected and has been chiefly investigated in our recent paper \cite{BT2021}, which can be seen as a preliminary to the present one.
Of course, if $\AA_\T \neq \emptyset$ for some~$\T \in \PP$, then~$\AA \neq \emptyset$, but the converse is not true in general, even for small values of~$\Vert \T \Vert$, as illustrated in~\cite[Example~1.1]{BT2021}.
The following instrumental reachability result comes from \cite{BT2021}.

\begin{lemma}\label{lemmaaccessible}
If a pair~$(x,u) \in \AA$ has no abnormal strong extremal lift, then there exists $\delta > 0$ such that~$\AA_\T \neq \emptyset$ for every~$\T \in \PP$ satisfying~$\Vert \T \Vert \leq \delta$. Moreover, for any~$\rho > 0$, the threshold~$\delta > 0$ can be chosen small enough to guarantee that, for every~$\T \in \PP$ satisfying~$\Vert \T \Vert \leq \delta$, the set~$\AA_\T$ contains a pair~$(x_\T,u_\T)$ such that~$\Vert u_\T - u \Vert_{\L^1} \leq \rho$.
\end{lemma}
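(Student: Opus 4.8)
The plan is to reduce the existence of a sampled-data admissible pair to a finite-dimensional surjectivity statement, and to exploit the nondegeneracy hypothesis through the classical Pontryagin needle-variation technique. First I would introduce the endpoint map: for $v \in \L^\infty_\U$ let $x_v \in \AC$ denote the solution of $\dot{x} = f(x,v,\cdot)$ with $x(0)=x_0$, and set $E(v) := x_v(T)$. Under \eqref{eqhcomp} and the $\C^2$ regularity of $f$, a Gronwall estimate shows that $E$ is well defined and Lipschitz continuous on a neighbourhood of $u$ for the $\L^1$-distance, and Fréchet differentiable, its derivative $\mathrm{d}E(u)\cdot w = y_w(T)$ being read off the linearized (variational) equation driven by $w$. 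Admissibility of $(x,u)$ is precisely $E(u) = x_T$.

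Second, I would translate the hypothesis ``no abnormal strong extremal lift'' into a controllability statement. A nonzero covector $p_T \in \R^n$ generates, via backward integration of \eqref{eqAE} with $p^0=0$, an abnormal adjoint $p$; the Hamiltonian maximization condition \eqref{eqHM} with $p^0 = 0$ is exactly the assertion that $p_T$ supports the Pontryagin cone $\KK \subset \R^n$, the closed convex cone generated by the endpoint images of the admissible needle variations of $u$ with perturbed values in $\U$. Hence the absence of an abnormal strong extremal lift is equivalent to $\KK = \R^n$, i.e. no nonzero covector supports $\KK$. Consequently one can select finitely many needle variations, concentrated at Lebesgue points $s_1,\ldots,s_k$ of $u$, whose endpoint images positively span $\R^n$; this is the conic surjectivity that I will transport to the discrete setting.

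Third, I would build the sampled candidates and run a finite-dimensional fixed-point argument, uniformly in $\T$. For $\Vert\T\Vert$ small, let $u^\T \in \PC^\T_\U$ be the piecewise constant control equal on $[t_i,t_{i+1})$ to the average of $u$ there; convexity of $\U$ keeps $u^\T$ admissible and the Lebesgue differentiation theorem gives $\Vert u^\T - u\Vert_{\L^1} \to 0$, whence $E(u^\T) \to E(u) = x_T$ by the Lipschitz property. Implementing each of the $k$ needle variations inside the single partition interval containing $s_j$ produces, for an amplitude vector $\lambda$ in a fixed small simplex $\Lambda \subset \R^k$, a perturbed control $u^\T_\lambda \in \PC^\T_\U$ and a continuous map $\Phi^\T(\lambda) := E(u^\T_\lambda)$. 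I would check that $\Phi^\T(0) = E(u^\T) \to x_T$ and that the spanning property of the second step survives discretization, so that for $\Vert\T\Vert$ small the image $\Phi^\T(\Lambda)$ contains a fixed neighbourhood of $x_T$; a Brouwer-type quantitative open-mapping argument then yields $\lambda^\T$ with $\Phi^\T(\lambda^\T) = x_T$, that is, a pair $(x_\T,u_\T) := (x_{u^\T_{\lambda^\T}}, u^\T_{\lambda^\T}) \in \AA_\T$. Since $\lambda^\T$ can be taken of size comparable to $\vert E(u^\T)-x_T\vert \to 0$, the perturbation added to $u^\T$ is small in $\L^1$, and combining with the averaging estimate gives $\Vert u_\T - u\Vert_{\L^1} \leq \rho$ for $\Vert\T\Vert \leq \delta$, with $\delta$ chosen small enough.

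The main obstacle I anticipate is the uniformity over $\T$ of the conic surjectivity: one must verify that the localized needle variations can be realized inside $\PC^\T$ while respecting the convex constraint $\U$, and that the resulting finite-dimensional maps $\Phi^\T$ possess, for all sufficiently fine $\T$, a uniformly nondegenerate first-order behaviour at $\lambda=0$ approximating the full-rank permanent variation matrix of the second step. Controlling these discretization errors, so that the spanning directions do not collapse and the fixed-point argument applies with a threshold $\delta$ independent of the particular partition, is the technical heart of the proof; the remaining ingredients are standard Gronwall continuity and $\L^1$-approximation estimates.
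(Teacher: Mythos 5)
A preliminary remark: the paper does not prove Lemma~\ref{lemmaaccessible} at all --- it is imported from \cite{BT2021} --- so there is no in-paper argument to compare yours against line by line. That said, your overall strategy (read the absence of abnormal strong extremal lifts as the statement that the needle-variation Pontryagin cone is all of $\R^n$, extract finitely many needles whose endpoint vectors positively span $\R^n$, transplant them onto the averaged control $u^\T \in \PC^\T_\U$, and conclude by a quantitative Brouwer/conic covering argument) is the natural route and is, in spirit, the one followed in the cited reference.

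There is, however, a concrete flaw in your third step. You implement the $j$-th needle ``inside the single partition interval containing $s_j$'' with an amplitude ranging over a fixed simplex. A $\PC^\T$-control takes a single value on that cell, so the only continuous one-parameter deformation available there is the convex combination $(1-\lambda_j)u^\T_i+\lambda_j\omega_j$. Its effect on the endpoint is proportional to the cell length $t_{i+1}-t_i\le\Vert\T\Vert$, hence collapses as $\Vert\T\Vert\to 0$; worse, its leading direction is $\nabla_u f(\cdot,u^\T_i,\cdot)(\omega_j-u^\T_i)$, so it generates the ``gradient'' cone whose supporting covectors are the abnormal \emph{weak} extremal lifts, not the needle cone governed by your hypothesis on \emph{strong} lifts. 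Either defect destroys the uniform surjectivity you need; you correctly flag this as the technical heart, but the construction you propose cannot deliver it. The repair is to parametrize the $j$-th variation by the actual time-length $\lambda_j\in[0,\bar\lambda]$ of a genuine needle: assign the value $\omega_j$ on every cell contained in $[s_j,s_j+\lambda_j]$ and a convex interpolation between $u^\T_i$ and $\omega_j$ on the one fractional cell at the end (this is precisely where the convexity of $\U$ enters). The resulting map $\lambda\mapsto E(u^\T_\lambda)$ is continuous and, by the truncated $\L^1$-Lipschitz estimate, uniformly within $O(\Vert\T\Vert)+C\Vert u^\T-u\Vert_{\L^1}$ of the permanent needle-variation map; one then concludes not via a nondegenerate derivative at $\lambda=0$ (which still fails for sub-cell values of $\lambda_j$) but via a degree-theoretic covering of a fixed ball around $x_T$ by the permanent map, stable under uniformly small continuous perturbations. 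Choosing $\bar\lambda$ small also yields $\Vert u_\T-u\Vert_{\L^1}\le\rho$.
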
 

We introduce the convexity assumption
\begin{equation}\label{eqhconv2}\tag{$\mathrm{H}^{\textrm{conv}}_{\nabla}$}
\forall (x,t) \in \R^n \times [0,T], \; \VV_{\nabla} (x,t) \text{ is convex}, 
\end{equation}
where the epigraph of the extended ``gradient velocity set" is defined by
$$ \VV_{\nabla} (x,t) := \{ (f(x,u,t), F_\nabla(x,u,t)+\gamma ) \mid (u,\gamma) \in \U \times \Gamma \} $$
with
\begin{multline*}
F_\nabla(x,u,t) := \Big(  (\nabla_x f, \nabla_u f )(x,u,t), \nabla_u f(x,u,t) u , \\[3pt]
 L(x,u,t) , (\nabla_x L, \nabla_u L)(x,u,t) , \langle \nabla_u L(x,u,t) , u \rangle_{m} \Big) ,
\end{multline*}
for all $(x,u,t) \in \R^n \times \R^m \times [0,T]$ and
$$
\Gamma := \{ ( 0_{\R^{n \times n}} , 0_{\R^{n \times m}} ) \} \times \{ 0_{\R^n}  \} \times \R_+ \times \{ ( 0_{\R^n} , 0_{\R^m} ) \} \times \R_+ .
$$
Note that~\eqref{eqhconv2} implies~\eqref{eqhconv}.

\begin{theorem}\label{thmcv}
Assume that $\AA \neq \emptyset$. Under~\eqref{eqhcomp} and~\eqref{eqhconv}, if the solution~$(x^*,u^*)$ to~\eqref{theproblem} is unique and has no abnormal strong extremal lift, then there exists~$\delta > 0$ such that, for every~$\T \in \PP$ satisfying~$\Vert \T \Vert \leq \delta$, \eqref{thesampledproblem} {has at least one solution. Moreover, all solutions~$(x^*_\T,u^*_\T)$ to \eqref{thesampledproblem} satisfy:}
\begin{enumerate}
\item[\rm{(i)}] $x^*_\T$ converges uniformly to $x^*$ on $[0,T]$ as~$\Vert \T \Vert\rightarrow 0$;
\item[\rm{(ii)}] $\CC(x^*_\T,u^*_\T)$ converges to $\CC(x^*,u^*)$ as~$\Vert \T \Vert\rightarrow 0$.
\end{enumerate}
Under the additional assumption~\eqref{eqhconv2}, if~$(x^*,u^*)$ has a unique weak extremal lift~$(p,p^0)$ that is normal, the threshold~$\delta > 0$ can be chosen small enough to guarantee that, for every~$\T \in \PP$ satisfying~$\Vert \T \Vert \leq \delta$, any $\T$-averaged weak extremal lift~$(p_\T,p^0_\T)$ of~$(x^*_\T,u^*_\T)$ is normal. Furthermore, normalizing the extremal lifts so that~$p^0 = p^0_\T = -1$,
\begin{enumerate}
\item[\rm{(iii)}] $p_\T$ converges uniformly to $p$ on $[0,T]$ as~$\Vert \T \Vert\rightarrow 0$.
\end{enumerate}
\end{theorem}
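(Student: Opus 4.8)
The plan is to run a Filippov-type compactness–closure argument along arbitrary sequences of partitions with vanishing norm, first at the level of states and costs (using only~\eqref{eqhconv}), and then, with the stronger convexity~\eqref{eqhconv2}, at the level of the costates.

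\emph{Existence and items (i)–(ii).} Since $(x^*,u^*)$ has no abnormal strong extremal lift, Lemma~\ref{lemmaaccessible} gives $\delta>0$ such that $\AA_\T\neq\emptyset$ whenever $\Vert\T\Vert\leq\delta$, and Proposition~\ref{propsampledfilippov} then yields a solution to~\eqref{thesampledproblem} under~\eqref{eqhcomp}. Fix any sequence $\T_k$ with $\Vert\T_k\Vert\to 0$ and solutions $(x^*_{\T_k},u^*_{\T_k})$. For the upper bound I would use the second part of Lemma~\ref{lemmaaccessible} with $\rho_k\to 0$ to produce $(x_{\T_k},u_{\T_k})\in\AA_{\T_k}$ with $\Vert u_{\T_k}-u^*\Vert_{\L^1}\to 0$, so that $\CC(x_{\T_k},u_{\T_k})\to\CC(x^*,u^*)$ and, by optimality, $\limsup_k\CC(x^*_{\T_k},u^*_{\T_k})\leq\CC(x^*,u^*)$. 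For the lower bound,~\eqref{eqhcomp} bounds the states by $R$ and the velocities $f(x^*_{\T_k},u^*_{\T_k},\cdot)$ in $\L^\infty$, so Arzel\`a--Ascoli gives a subsequence $x^*_{\T_k}\to\bar x$ in $\C$; extracting weak limits of the velocity–cost pairs and using the convexity~\eqref{eqhconv} of $\VV$ together with a measurable selection, I obtain $\bar u\in\L^\infty_\U$ with $(\bar x,\bar u)\in\AA$ and $\CC(\bar x,\bar u)\leq\liminf_k\CC(x^*_{\T_k},u^*_{\T_k})$. Combining the two bounds, uniqueness of the minimizer forces $(\bar x,\bar u)=(x^*,u^*)$ and $\CC(x^*_{\T_k},u^*_{\T_k})\to\CC(x^*,u^*)$; as the limit is subsequence-independent, the whole family converges, giving~(i) and~(ii).

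\emph{Compactness of the costates.} For~(iii), fix solutions $(x^*_\T,u^*_\T)$ and $\T$-averaged weak extremal lifts $(p_\T,p^0_\T)$, normalized by $\vert p^0_\T\vert^2+\vert p_\T(T)\vert^2=1$. The coefficients $\nabla_x f(x^*_\T,u^*_\T,\cdot)$ and $\nabla_x L(x^*_\T,u^*_\T,\cdot)$ of the linear equation~\eqref{eqAE} are uniformly bounded by~\eqref{eqhcomp} and continuity, so Gr\"onwall bounds $\Vert p_\T\Vert_\C$ uniformly and bounds $\dot p_\T$ in $\L^\infty$; Arzel\`a--Ascoli then gives, along any $\T_k\to 0$, uniform convergence $p_{\T_k}\to\bar p$ and $p^0_{\T_k}\to\bar p^0\leq 0$ with $\vert\bar p^0\vert^2+\vert\bar p(T)\vert^2=1$, so $(\bar p,\bar p^0)$ is nontrivial.

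\emph{Closure of~\eqref{eqAE} and~\eqref{eqHG}.} This is the heart of the argument and the main obstacle, since the controls do not converge strongly. Here~\eqref{eqhconv2} enters: applying the Filippov closure to the augmented set $\VV_\nabla$ (whose convexity is precisely~\eqref{eqhconv2}), the joint weak-$*$ limits along $(x^*_{\T_k},u^*_{\T_k})$ of $f$, $\nabla_x f$, $\nabla_u f$, $\nabla_u f\,u$, $L$, $\nabla_x L$, $\nabla_u L$ and $\langle\nabla_u L,u\rangle_m$ all lie in $\VV_\nabla(x^*(t),t)$, hence are realized by a single measurable $w(t)\in\U$ with $f(x^*,w,\cdot)=\dot x^*$; by the same cost comparison and uniqueness as in the first step, $w=u^*$ a.e., so every gradient coefficient converges weakly to its value at $(x^*,u^*)$. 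Multiplying these weakly convergent coefficients by the uniformly convergent $p_{\T_k}$ (weak$\times$strong), I pass to the limit in the integral form of~\eqref{eqAE} to get that $\bar p$ solves~\eqref{eqAE} along $(x^*,u^*)$. For~\eqref{eqAHG}, note that its integrand equals $\nabla_u H(x^*_\T,u^*_\T,p_\T,p^0_\T,\cdot)$ on each $[t_i,t_{i+1})$ and thus converges weakly to $\nabla_u H(x^*,u^*,\bar p,\bar p^0,\cdot)$; testing the membership $\int_{t_i}^{t_{i+1}}\nabla_u H\,ds\in\NN_\U[u_i]$ against fixed $v\in\U$ and passing to the limit, using Lebesgue differentiation for the vanishing-length averages and the outer semicontinuity of $\NN_\U$, yields~\eqref{eqHG} along $(x^*,u^*)$. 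Hence $(\bar p,\bar p^0)$ is a weak extremal lift of $(x^*,u^*)$.

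\emph{Conclusion.} By the assumed uniqueness (up to scaling) and normality of the weak extremal lift, $(x^*,u^*)$ has no abnormal weak extremal lift, so any nontrivial limit above satisfies $\bar p^0\neq 0$. In particular no normalized sequence can have $p^0_{\T_k}\to 0$, which both proves that $(p_\T,p^0_\T)$ is normal for $\Vert\T\Vert$ small and gives a uniform bound $\vert p^0_\T\vert\geq c>0$. Renormalizing to $p^0=p^0_\T=-1$ keeps the rescaling factors bounded, the normal limit coincides with $(p,-1)$ by uniqueness, and the usual subsequence argument promotes this to uniform convergence of the whole family $p_\T\to p$, which is~(iii).
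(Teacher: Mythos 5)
Your overall architecture matches the paper's: reachability via Lemma~\ref{lemmaaccessible} plus Proposition~\ref{propsampledfilippov} for existence, a Filippov closure with $\VV$ for (i)--(ii) sandwiched by the $\L^1$-approximants from Lemma~\ref{lemmaaccessible}, then compactness of normalized lifts and a Filippov closure with $\VV_\nabla$ for (iii). The first part and the adjoint-equation closure (weak-star coefficients times uniformly convergent $p_{\T_k}$) are sound. But the step where you pass to the limit in~\eqref{eqAHG} to obtain~\eqref{eqHG} has a genuine gap, and it is precisely the step the whole $\VV_\nabla$ machinery exists to handle.

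The membership $\int_{t_i}^{t_{i+1}}\nabla_u H(x^*_{\T_k},u_{k,i},p_{\T_k},p^0_{\T_k},s)\,ds\in\NN_\U[u_{k,i}]$ is a cone condition \emph{at the control value} $u_{k,i}$. Outer semicontinuity of $u\mapsto\NN_\U[u]$ would require $u_{k,i}\to u^*(t)$ in a pointwise or strong sense, which is exactly what you do not have: the controls only converge weakly and may oscillate. Likewise, ``Lebesgue differentiation for the vanishing-length averages'' does not apply because the averaged integrand changes with $k$; there is no fixed function being averaged. Written out, the inequality $\langle\int_{t_i}^{t_{i+1}}\nabla_u H\,ds,\,v-u_{k,i}\rangle\le 0$ contains the terms $\int\langle p_{\T_k},\nabla_u f(x^*_{\T_k},u^*_{\T_k},s)\,u^*_{\T_k}(s)\rangle\,ds$ and $p^0_{\T_k}\int\langle\nabla_u L(x^*_{\T_k},u^*_{\T_k},s),u^*_{\T_k}(s)\rangle\,ds$, which are quadratic in the weakly convergent controls; products of weak limits are not limits of products. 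This is why $F_\nabla$ carries the components $\nabla_u f\,u$ and $\langle\nabla_u L,u\rangle_m$, and why $\Gamma$ puts a $\{0\}$ slot on the former (forcing its weak-star limit to be exactly $\nabla_u f(x^*,u^*,\cdot)u^*$) but only an $\R_+$ slot on the latter. In particular your assertion that ``every gradient coefficient converges weakly to its value at $(x^*,u^*)$'' is false for $\langle\nabla_u L(x,u,\cdot),u\rangle_m$: its limit is $\langle\nabla_u L(x^*,u^*,\cdot),u^*\rangle_m+\overline{\gamma}_7$ with $\overline{\gamma}_7\ge 0$ possibly nonzero, and the argument survives only because this defect enters multiplied by $p^0\le 0$, hence with a favorable sign in the inequality. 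The correct route (the paper's, via Lemmas~\ref{lemcharactweakextremal} and~\ref{lemcharactsampledextremal}) is to sum the cone conditions over $i$ into a single integrated variational inequality $\langle p_{\T_k}(T),w(T,u^*_{\T_k},v_k-u^*_{\T_k})\rangle_n+p^0_{\T_k}w^0(T,u^*_{\T_k},v_k-u^*_{\T_k})\le 0$ tested against $v_k=v^{\T_k}\to v$ in $\L^1$, pass to the limit there using the $\VV_\nabla$-closure and the sign of $\overline{\gamma}_7$, and only then recover the pointwise condition~\eqref{eqHG} by needle variations at Lebesgue points. Your compactness normalization $\vert p^0_\T\vert^2+\vert p_\T(T)\vert^2=1$ is a legitimate variant of the paper's two-step contradiction argument, but it does not repair this closure step.
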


Theorem \ref{thmcv} is proved in Appendix~\ref{appmainresults}. The proof of the second part is more involved than the one of the first part.

\begin{remark}\label{remaffinequad}
Theorem~\ref{thmcv} applies to control-affine systems with quadratic cost:
$$ f(x,u,t) = A(x,t)u+B(x,t), $$
$$ L(x,u,t) = \dfrac{1}{2} \langle R(t)u,u \rangle_m + \langle Q(x,t),u \rangle_m + S(x,t) , $$
where $A$, $B$, $Q$, $R$ and~$S$ are of class~$\C^2$ and $R(t) \in \R^{m \times m}$ is a symmetric positive semidefinite matrix for every $t \in [0,T]$, 
with~$\U$ compact and~$A$, $B$ growing at most linearly at infinity. 
Indeed, then, \eqref{eqhcomp} and~\eqref{eqhconv2} are satisfied.
\end{remark}

\begin{remark}\label{rem_CV_control}
The first part of Theorem~\ref{thmcv} establishes, under~\eqref{eqhcomp} and~\eqref{eqhconv}, convergence of the optimal trajectories and costs; but not of the optimal controls. 
The convergence of costates, established under the additional assumption~\eqref{eqhconv2}, implies the uniform convergence of optimal controls under the additional assumption that, as a consequence of the Hamiltonian maximization condition~\eqref{eqHM} (or the weaker Hamiltonian gradient condition~\eqref{eqHG}), one can express the optimal permanent control~$u^*$ as a continuous function of $(x^*,p)$:
\begin{equation}\label{eq1}
u^*(t) = \mathcal{F} ( x^*(t),p(t),t), \; \text{ a.e.}\ t\in[0,T],
\end{equation}
and similarly that, for every~$\T = \{ t_i \}_{i=0,\ldots,N} \in \PP$, as a consequence of the $\T$-averaged Hamiltonian gradient condition~\eqref{eqAHG}, one can express the values~$u^*_{\T,i}$ of the optimal sampled-data control~$u^*_\T$ as continuous functions of the restrictions to the sampling intervals~$[t_i,t_{i+1}]$ of $(x^*_\T,p_\T)$:
\begin{equation}\label{eq2}
 u^*_{\T,i} = \mathcal{F}_i \Big( x^*_{\T,|[t_i,t_{i+1}]},p_{\T,|[t_i,t_{i+1}]} \Big), \; \forall i \in \{ 0,\ldots N-1 \}. 
\end{equation}
The above \eqref{eq1} and~\eqref{eq2} are obviously true in the linear-quadratic case (see~\cite[Propositions~2 and~3]{BT2017} for details), and more generally for control-affine systems with quadratic cost (see Remark \ref{remaffinequad}).
For more general nonlinear problems, \eqref{eq1} and~\eqref{eq2} are true under appropriate Legendre-Clebsch assumptions (see \cite{BonnardChyba, BressanPiccoli, Trelat_JOTA2012}).
\end{remark}



\begin{remark}
The convergence of costates obtained in the second part of Theorem~\ref{thmcv} is important not only to infer the convergence of the optimal controls but also, from the numerical point of view, to initialize successfully shooting methods (see \cite{Trelat_JOTA2012}).
\end{remark}

\begin{remark}
Assumption \eqref{eqhconv} is usually used to derive Filippov-type existence results for optimal controls (see~\cite{Cesari, LeeMarkus}), by ensuring the uniform convergence of states (satisfying nonlinear differential equations) associated with a minimizing sequence.
The stronger assumption \eqref{eqhconv2} is not usual: it assumes the convexity, not only of the epigraphs~$\mathcal{V}(x,t)$ of the extended velocity sets, but also of the extended ones~$\mathcal{V}_\nabla(x,t)$ including gradients of~$f$ and~$L$. This assumption is required in Theorem~\ref{thmcv} to establish the uniform convergence of costates (and, as a result, of controls). Our proof is based on the convergence of variation vectors (well known in the proof of Pontryagin maximum principle) which satisfy linearized differential equations involving those gradients. This is where the convexity of the sets~$\mathcal{V}_\nabla(x,t)$ is needed, in order to follow and develop the classical Filippov approach.
\end{remark}

\begin{remark}
Theorem~\ref{thmcv} can be straightforwardly extended to the case of general terminal constraints $g(x(0),x(T)) \in \mathcal{M}$ and of free final time. The issue of adding running state constraints is let open.
\end{remark}

\begin{remark}
The absence of abnormal strong extremal lift considered in Theorem~\ref{thmcv} is, in some sense, a generic property. For control-affine systems with quadratic cost (see Remark \ref{remaffinequad}), the set of endpoints of abnormal minimizers is of empty interior or even of zero measure under appropriate assumptions (see~\cite{RiffordTrelat_MA2005, RiffordTrelat_JEMS2009}), and is empty if $m\geq 3$ for Whitney generic systems (see~\cite{ChitourJeanTrelat_JDG2006, ChitourJeanTrelat_SICON2008}).
\end{remark}

\begin{remark}
The convexity of the set~$\U$ is required in the proof of Theorem~\ref{thmcv} to apply the reachability Lemma~\ref{lemmaaccessible} (see~\cite[Theorem~1.1 and Remarks~3.5 and~3.6 for a discussion on that point]{BT2021}) and its compactness is required to apply a technical Filippov-type lemma (Lemma~\ref{lemtech1}). 
The proof of the second part of Theorem~\ref{thmcv} involves continuous second-order gradients of~$f$ and~$L$. Nevertheless the~$\C^2$-regularity of~$f$ and~$L$ can be slightly relaxed with respect to the variables~$u$ and~$t$.
\end{remark}

\begin{remark}
In order to obtain easy-to-read statements, we have assumed in Theorem~\ref{thmcv} that $(x^*,u^*)$ is the unique solution to \eqref{theproblem} (and, in the second part, that it has a unique weak extremal lift). Actually Theorem~\ref{thmcv} remains valid without any uniqueness assumption but then the statements must be written in terms of closure points, as in \cite{HaberkornTrelat_SICON2011,SilvaTrelat_TAC2010}.
\end{remark}

\appendix

\section{Technical preliminaries}\label{app1}

\subsection{Convergence in linear differential equations}

\begin{proposition}\label{propconvlineardiffeq}
Let $z_k \in \AC$ be the unique solution to the (forward) linear Cauchy problem
\begin{equation*}
\left\lbrace
\begin{array}{l}
\dot{z}_k(t) = A_k(t) z_k(t) + B_k(t) v_k(t) + C_k(t) \text{ a.e.\ } t \in [0,T] , \\[3pt]
z_k(0) = \Psi_k, 
\end{array}
\right.
\end{equation*}
where $A_k \in \L^\infty([0,T],\R^{n \times n})$, $B_k \in \L^\infty([0,T],\R^{n \times m})$, $v_k \in \L^\infty([0,T],\R^m)$, $C_k \in \L^\infty([0,T],\R^n)$ and $\Psi_k \in \R^n$, for all~$k \in \N$. 
If:
\begin{itemize}
\item $A_k$ converges weakly-star in~$\L^\infty([0,T],\R^{n \times n})$ to $A$, 
\item $B_k$ converges weakly-star in $\L^\infty([0,T],\R^{n \times m})$ to $B$, 
\item $C_k$ converges weakly-star in~$\L^\infty([0,T],\R^n)$ to $C$, 
\item $v_k$ converges in~$\L^1([0,T],\R^m)$ to $v \in \L^\infty([0,T],\R^m)$, 
\item $\Psi_k$ converges in $\R^n$ to $\Psi$, 
\end{itemize}
as $k\rightarrow+\infty$, 
then $z_k$ converges uniformly on~$[0,T]$ to~$z \in \AC$ that is the unique solution to the linear Cauchy problem
\begin{equation*}
\left\lbrace
\begin{array}{l}
\dot{z}(t) = A(t) z(t) + B(t) v(t) +C(t) \text{ a.e.\ } t \in [0,T] , \\[3pt]
z(0) = \Psi. 
\end{array}
\right.
\end{equation*}
\end{proposition}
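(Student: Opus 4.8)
The plan is to work with the equivalent integral (mild) formulation
$$z_k(t) = \Psi_k + \int_0^t \bigl( A_k(s) z_k(s) + B_k(s) v_k(s) + C_k(s) \bigr)\, ds,$$
and to establish convergence through a compactness-and-uniqueness argument rather than a direct fixed-point/Gr\"onwall contraction, since the mere weak-star convergence of the coefficients does not pass to the product $A_k z_k$ on its own. The whole difficulty of the statement is concentrated in that product, and the argument is designed around resolving it.

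First I would record uniform bounds. By the Banach--Steinhaus theorem the weak-star convergent sequences $A_k$, $B_k$, $C_k$ are bounded in their respective~$\L^\infty$ spaces, say by a common constant~$M_0$, while $\Vert v_k \Vert_{\L^1}$ is bounded since $v_k \to v$ in~$\L^1$, and $\Psi_k$ is bounded since it converges. Inserting these bounds into the integral equation and applying Gr\"onwall's lemma yields a uniform estimate $\Vert z_k \Vert_{\C} \leq M$ for all~$k$.

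Next I would prove equicontinuity of~$\{ z_k \}$. For $0 \leq s \leq t \leq T$ one writes $z_k(t)-z_k(s) = \int_s^t ( A_k z_k + B_k v_k + C_k )$, whose $A_k z_k$ and $C_k$ contributions are bounded by $M_0(M+1)(t-s)$, hence are equi-Lipschitz. The delicate term is $\int_s^t B_k v_k$, bounded by $M_0 \int_s^t \vert v_k \vert$; here the key observation is that a convergent sequence in~$\L^1$ is uniformly integrable (Vitali, or Dunford--Pettis), so $\sup_k \int_E \vert v_k \vert \to 0$ as the Lebesgue measure of~$E$ tends to~$0$, which furnishes the missing equicontinuity. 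Combined with the uniform bound, the Arzel\`a--Ascoli theorem yields a subsequence (not relabeled) converging uniformly to some $\tilde z \in \C$.

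It then remains to pass to the limit in the integral equation along this subsequence and to identify~$\tilde z$, which is the crux of the proof. The decisive device is the splitting $A_k z_k = A_k ( z_k - \tilde z ) + A_k \tilde z$: the first term tends to~$0$ in~$\L^1$ because $\Vert A_k \Vert_{\L^\infty}$ is bounded and $z_k \to \tilde z$ uniformly, whereas in the second term $\tilde z$ is a fixed continuous (hence~$\L^1$) function, so $\int_0^t A_k \tilde z \to \int_0^t A \tilde z$ follows at once from $A_k \rightharpoonup A$ weakly-star. The analogous decomposition $B_k v_k = B_k ( v_k - v ) + B_k v$, using $v \in \L^\infty$ as a test function against the weak-star limit, disposes of the control term, and $\int_0^t C_k \to \int_0^t C$ is immediate. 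Passing to the limit thus shows that $\tilde z$ solves the limit Cauchy problem; by uniqueness of its solution, $\tilde z = z$. Since every subsequence of~$\{ z_k \}$ admits a further subsequence converging uniformly to the same limit~$z$, the entire sequence $z_k$ converges uniformly to~$z$, which concludes.
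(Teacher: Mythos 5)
Your proof is correct, but it follows a genuinely different architecture from the paper's. The paper never extracts a subsequence from $\{z_k\}$: it writes the equation satisfied by the difference $z_k - z$ (with $z$ the known solution of the limit problem), applies the Duhamel formula and the Gronwall lemma to obtain the explicit quantitative bound $\Vert z_k - z \Vert_{\C} \leq (\Vert \Psi_k - \Psi \Vert_{\R^n} + \Vert g_k \Vert_{\C} + M_B \Vert v_k - v\Vert_{\L^1})\, e^{M_A T}$, and then shows that the auxiliary functions $g_k(t) = \int_0^t (A_k-A)z + (B_k-B)v + (C_k-C)$ tend to zero uniformly (pointwise by weak-star convergence against the fixed $\L^1$ test functions $z\mathbbm{1}_{[0,t]}$ and $v\mathbbm{1}_{[0,t]}$, then uniformly by equi-Lipschitz continuity). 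The algebraic heart is the same in both arguments --- splitting $A_k z_k$ so that the weak-star limit only ever acts on a fixed function while the remainder is killed by uniform convergence and the $\L^\infty$ bound on $A_k$ --- but you center the splitting at a limit $\tilde z$ produced by Arzel\`a--Ascoli, whereas the paper centers it at $z$ itself. Your route is the standard one when the limit is not known a priori, and it is the one that forces you to prove equicontinuity, hence your (correct and genuinely needed) appeal to the uniform integrability of the $\L^1$-convergent sequence $v_k$, a point the paper sidesteps entirely by absorbing $B_k(v_k - v)$ into the single term $M_B\Vert v_k - v\Vert_{\L^1}$. What the paper's approach buys is brevity and an explicit error estimate; what yours buys is robustness (no need to presuppose the limit equation is solvable --- you construct its solution) at the cost of the compactness and subsequence-uniqueness bookkeeping.
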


\begin{proof}
The weak-star convergences imply that there exist~$M_A \geq 0$ and $M_B \geq 0$ such that $\Vert A_k \Vert_{\L^\infty([0,T],\R^{n \times n})} \leq M_A$ and~$\Vert B_k \Vert_{\L^\infty([0,T],\R^{n \times m})} \leq M_B$ for every~$k \in \N$. It follows from the Duhamel formula that
$$
\Vert z_k(t) - z(t) \Vert_{\R^n} \leq \Vert \Psi_k - \Psi \Vert_{\R^n} + \Vert g_k \Vert_{\C} + M_B \Vert v_k - v \Vert_{\L^1} 
+ M_A \int_0^t \Vert z_k(s) - z(s) \Vert_{\R^n} \; ds,
$$
where
$$
g_k (t) := \int_0^t (A_k(s)-A(s)) z(s)+ (B_k(s)-B(s)) v(s) 
 + (C_k(s)-C(s)) \; ds, 
$$
for all $t \in [0,T]$ and $k \in \N$. By the Gronwall lemma, we get
$$ \Vert z_k - z \Vert_{\C} \leq ( \Vert \Psi_k - \Psi \Vert_{\R^n} + \Vert g_k \Vert_{\C} + M_B \Vert v_k - v \Vert_{\L^1} ) e^{M_A T}, $$
for every~$k \in \N$. The weak-star convergences imply that the sequence $(g_k)_{k \in \N}$ converges pointwisely on~$[0,T]$ to the null function, and thus uniformly by equi-Lipschitz continuity (as in \cite[Lemma~3]{BT2017}). The proof is complete.
\end{proof}

Proposition~\ref{propconvlineardiffeq} is obviously adapted to backward linear Cauchy problems.

\subsection{Approximation by piecewise constant functions}\label{secapprox}

For all~$\T = \{ t_i \}_{i=0,\ldots,N} \in \PP$ and~$u \in \L^1$, we denote by~$u^\T \in \PC^\T$ the {averaged piecewise constant function} defined by
\begin{equation}\label{eqaveraged}
u^\T_i := \dfrac{1}{t_{i+1}-t_i} \int_{t_i}^{t_{i+1}} u(s) \; ds 
\end{equation}
for every $i \in \{0,\ldots,N-1\}$. The next two lemmas are taken from~\cite[Appendix~B]{BT2021}.

\begin{lemma}\label{lem1}
For every~$u \in \L^1$, 
$\lim_{\Vert \T \Vert \to 0} \Vert u^\T - u \Vert_{\L^1} = 0 $.
\end{lemma}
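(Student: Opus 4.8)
The plan is to reduce the general $\L^1$ statement to the case of continuous functions via a density argument, exploiting the key fact that the averaging operator $u \mapsto u^\T$ defined by~\eqref{eqaveraged} is a contraction on $\L^1$, uniformly with respect to the partition $\T$.

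First I would establish that contraction property. For any $w \in \L^1$ and any partition $\T = \{ t_i \}_{i=0,\ldots,N} \in \PP$, Jensen's inequality applied on each sampling interval gives
$$ \int_{t_i}^{t_{i+1}} \vert w^\T_i \vert \; dt = (t_{i+1}-t_i) \left\vert \frac{1}{t_{i+1}-t_i} \int_{t_i}^{t_{i+1}} w(s) \; ds \right\vert \leq \int_{t_i}^{t_{i+1}} \vert w(s) \vert \; ds, $$
and summing over $i \in \{ 0,\ldots,N-1\}$ yields $\Vert w^\T \Vert_{\L^1} \leq \Vert w \Vert_{\L^1}$. Since the averaging operator is linear, this controls the error incurred by replacing $u$ by any approximant $\phi$ inside the operator: $\Vert u^\T - \phi^\T \Vert_{\L^1} = \Vert (u-\phi)^\T \Vert_{\L^1} \leq \Vert u - \phi \Vert_{\L^1}$.

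Second I would treat the continuous case. Given $\eps > 0$, density of continuous functions in $\L^1$ provides $\phi \in \C$ with $\Vert u - \phi \Vert_{\L^1} \leq \eps$. Since $\phi$ is uniformly continuous on the compact interval $[0,T]$, with modulus of continuity $\omega$, and since $\phi^\T_i$ is an average of the values of $\phi$ over $[t_i,t_{i+1}]$, one has $\vert \phi^\T_i - \phi(t) \vert \leq \omega(\Vert \T \Vert)$ for every $t \in [t_i,t_{i+1})$ and every $i$. Integrating over $[0,T]$ gives $\Vert \phi^\T - \phi \Vert_{\L^1} \leq T \, \omega(\Vert \T \Vert)$, which tends to $0$ as $\Vert \T \Vert \to 0$.

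Finally I would combine these estimates through the triangle inequality:
$$ \Vert u^\T - u \Vert_{\L^1} \leq \Vert (u-\phi)^\T \Vert_{\L^1} + \Vert \phi^\T - \phi \Vert_{\L^1} + \Vert \phi - u \Vert_{\L^1} \leq 2\eps + T \, \omega(\Vert \T \Vert), $$
so that $\limsup_{\Vert \T \Vert \to 0} \Vert u^\T - u \Vert_{\L^1} \leq 2\eps$; letting $\eps \to 0$ concludes. This argument is elementary and I do not anticipate a genuine obstacle; the only point demanding care is the \emph{uniformity} of the approximation, namely that the error made by passing from $u$ to $\phi$ inside the averaging operator must be bounded independently of $\T$. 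This is precisely what the $\T$-uniform contraction bound $\Vert w^\T \Vert_{\L^1} \leq \Vert w \Vert_{\L^1}$ guarantees, and it is the ingredient that makes the density reduction legitimate.
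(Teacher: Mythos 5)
Your proof is correct: the $\T$-uniform contraction bound $\Vert w^\T \Vert_{\L^1} \leq \Vert w \Vert_{\L^1}$, the uniform-continuity estimate $\Vert \phi^\T - \phi \Vert_{\L^1} \leq T\,\omega(\Vert \T \Vert)$ for $\phi \in \C$, and the three-term triangle inequality fit together exactly as you claim, and you correctly identify the uniformity in $\T$ of the contraction as the point that legitimizes the density reduction. Note that the paper does not actually prove this lemma --- it is quoted from~\cite[Appendix~B]{BT2021} --- so there is no in-paper argument to compare against; your density argument is the standard route to this statement and is a valid, self-contained substitute for the citation.
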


\begin{lemma}\label{lem2}
If $u \in \L^1_\U$ then~$u^\T \in \PC^\T_\U$ for every~$\T \in \PP$.
\end{lemma}

\section{Input-output maps and cost function}\label{appinput}

Given any $u \in \L^1$, we define the forward Cauchy problem
\begin{equation}\tag{CP${}_u$}\label{eqCPu}
\left\lbrace
\begin{array}{l}
\dot{x}(t) = f(x(t),u(t),t) \text{ a.e.\ } t \in [0,T] , \\[3pt]
x(0) = x_0.
\end{array}
\right.
\end{equation}

\begin{definition}
A solution to~\eqref{eqCPu} for some $u \in \L^1$ is a function~$x \in \AC$ satisfying $x(0)=x_0$ and~$\dot{x}(t) = f(x(t),u(t),t)$ for almost every $t \in [0,T]$. We define:
\begin{enumerate}
\item[\rm{-}] $\UU $ as the set of controls $u \in \L^1$ such that~\eqref{eqCPu} admits a unique solution denoted by $x(\cdot,u)$.
\item[\rm{-}] $\UUadm:=\{u \in \UU \mid  (x(\cdot,u),u) \in \AA\}$.
\end{enumerate}
\end{definition}

\begin{definition}
The maps
$$ \fonction{\E}{\UU}{\C}{u}{\E(u) := x(\cdot,u)} $$
and
$$ \fonction{\E_T}{\UU}{\R^n}{u}{\E_T(u) := x(T,u)} $$
are respectively called the \textit{input-output map} and the \textit{final input-output map}. The map
$$ \fonction{\KK}{ \DD (\KK) }{\R}{u}{ \KK(u) := \di \int_0^T L(x(s,u),u(s),s) \; ds} $$
is called the \textit{cost function}, where $\DD (\KK):=\{u \in \UU  \mid  L(x(\cdot,u),u,\cdot) \in \L^1([0,T],\R)\}$.
\end{definition}

\begin{remark}\label{rem587}
\begin{enumerate}
\item[\rm{(i)}] $\AA \neq \emptyset$ if and only if $\UUadm \neq \emptyset$;
\item[\rm{(ii)}] $ \UUadm \subset ( \UU \cap \L^\infty_\U ) \subset ( \UU \cap \L^\infty ) \subset \DD (\KK) $.
\end{enumerate}
\end{remark}

\subsection{Regularity in~$\L^\infty$-norm}\label{secinputoutput}
The following result follows from the Cauchy-Lipschitz (or Picard-Lindel\"of) theory for Carath\'eodory dynamics, in particular from the Gronwall lemma.
\begin{proposition}\label{propinputoutput}
\begin{enumerate}
\item[\rm{(i)}] $\UU \cap \L^\infty$ is an open subset of $\L^\infty$.
\item[\rm{(ii)}] The restriction $\E^\infty$ of~$\E$ to~$\UU \cap \L^\infty$ is of class~$\C^1$ in $\L^\infty$-norm, and
$$ \D\E^\infty (u)(v) = w(\cdot,u,v), $$
for all $u \in \UU \cap \L^\infty$ and $v \in \L^\infty$, where $w(\cdot,u,v) \in \AC $ is the unique solution to the linear Cauchy problem
\begin{equation*}
\left\lbrace
\begin{array}{l}
\dot{w}(t) =  \nabla_x f(x(t,u),u(t),t) w(t) \\[3pt] 
\qqquad + \nabla_u f(x(t,u),u(t),t) v(t) \; \text{ a.e.\ } t \in [0,T] , \\[3pt]
w(0) =  0_{\R^n}. 
\end{array}
\right.
\end{equation*}
\item[\rm{(iii)}] The restriction~$\E_T^\infty$ of~$\E_T$ to~$\UU \cap \L^\infty$ is of class~$\C^1$  in $\L^\infty$-norm, and
$$
 \D\E_T^\infty (u)(v) = w(T,u,v) 
 = \int_0^T \Phi_u (T,s) \nabla_u f(x(s,u),u(s),s) v(s) \; ds 
$$
for all~$u \in \UU \cap \L^\infty$ and~$v \in \L^\infty$, where~$\Phi_u (\cdot,\cdot)$ is the state-transition matrix of $\nabla_x f(x(\cdot,u),u,\cdot) \in \L^\infty([0,T],\R^{n \times n})$.
\item[\rm{(iv)}] The restriction~$\KK^\infty$ of~$\KK$ to~$\UU \cap \L^\infty$ is of class~$\C^1$ in $\L^\infty$-norm, and
\begin{multline*}
\D\KK^\infty (u)(v) = w^0(T,u,v) \\[3pt]
= \int_0^T \langle \nabla_x L(x(s,u),u(s),s) , w(s,u,v) \rangle_{n} 
+ \langle \nabla_u L(x(s,u),u(s),s) , v(s) \rangle_{m} \; ds
\end{multline*}
for all $u \in \UU \cap \L^\infty$ and $v \in \L^\infty$, where $w^0(\cdot,u,v) \in \AC([0,T],\R) $ is the unique solution to the (trivial) Cauchy problem
\begin{equation*}
\left\lbrace
\begin{array}{l}
\dot{w}^0(t) = \langle \nabla_x L(x(t,u),u(t),t) , w(t,u,v) \rangle_{n}  + \langle \nabla_u L(x(t,u),u(t),t) , v(t) \rangle_{m} \\[3pt] 
w^0(0) = 0. 
\end{array}
\right.
\end{equation*}
\end{enumerate}
\end{proposition}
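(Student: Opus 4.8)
The plan is to exploit the $\C^2$-regularity of $f$ and $L$ together with the Cauchy-Lipschitz theory and the Gronwall lemma, treating (i)--(iv) in this order and finally reducing (iv) to (iii) by a state augmentation. The key preliminary observation is that, for any $u \in \UU \cap \L^\infty$, the solution $x(\cdot,u) \in \AC$ has compact graph in $\R^n \times [0,T]$ and $u$ has essentially bounded values, so that $f$ and its derivatives up to second order are bounded and Lipschitz in $x$ on a closed tube $\Omega_\eta := \{ (y,t) : \Vert y - x(t,u) \Vert_{\R^n} \leq \eta, \ t \in [0,T] \}$ for some $\eta > 0$, uniformly for control values in a fixed bounded neighborhood of the essential range of $u$.

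For (i), fix $u \in \UU \cap \L^\infty$. For any $u'$ with $\Vert u' - u \Vert_{\L^\infty}$ small, a continuation argument shows that the solution of $(\mathrm{CP}_{u'})$ stays inside $\Omega_\eta$ and is therefore unique and global: writing the difference $x(\cdot,u') - x(\cdot,u)$ in integral form, using that $f$ is Lipschitz in $x$ on $\Omega_\eta$ and bounding $f(y,u'(t),t) - f(y,u(t),t)$ by the Lipschitz constant of $f$ in $u$ times $\Vert u' - u \Vert_{\L^\infty}$, the Gronwall lemma yields $\Vert x(\cdot,u') - x(\cdot,u) \Vert_{\C} \leq c \, \Vert u' - u \Vert_{\L^\infty}$, which is $\leq \eta$ for $\Vert u' - u \Vert_{\L^\infty}$ small enough. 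Hence an $\L^\infty$-ball around $u$ is contained in $\UU \cap \L^\infty$, proving openness.

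For (ii), the candidate derivative $w(\cdot,u,v)$ solves the displayed linearized Cauchy problem, which is well posed since $\nabla_x f$ and $\nabla_u f$ are bounded along the trajectory. To establish Fr\'echet differentiability, set $\delta_v := x(\cdot,u+v) - x(\cdot,u)$ and $r_v := \delta_v - w(\cdot,u,v)$; a first-order Taylor expansion of $f$ about $(x(t,u),u(t),t)$ shows that $r_v$ solves a linear equation $\dot{r}_v = \nabla_x f(x(\cdot,u),u,\cdot)\, r_v + R_v$ with $r_v(0) = 0$, where the remainder $R_v$ is quadratic and bounded pointwise by $C(\Vert \delta_v \Vert_{\C}^2 + \Vert v \Vert_{\L^\infty}^2)$ thanks to the uniform bound on the second derivatives of $f$ on $\Omega_\eta$. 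Combining the Gronwall lemma with the Lipschitz estimate $\Vert \delta_v \Vert_{\C} = O(\Vert v \Vert_{\L^\infty})$ from (i) gives $\Vert r_v \Vert_{\C} = O(\Vert v \Vert_{\L^\infty}^2) = o(\Vert v \Vert_{\L^\infty})$, which is exactly differentiability with $\D\E^\infty(u)(v) = w(\cdot,u,v)$. For the \emph{continuity} of $u \mapsto \D\E^\infty(u)$ in operator norm, take $u_k \to u$ in $\L^\infty$; then $x(\cdot,u_k) \to x(\cdot,u)$ uniformly by (i), so the coefficients $\nabla_x f(x(\cdot,u_k),u_k,\cdot)$ and $\nabla_u f(x(\cdot,u_k),u_k,\cdot)$ converge uniformly. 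Writing the equation satisfied by $w(\cdot,u_k,v) - w(\cdot,u,v)$ and applying Gronwall, one bounds $\Vert w(\cdot,u_k,v) - w(\cdot,u,v) \Vert_{\C} \leq \varepsilon_k \Vert v \Vert_{\L^\infty}$ with $\varepsilon_k \to 0$ independent of $v$; this is the stability of linear Cauchy problems under convergence of coefficients, in the spirit of Proposition~\ref{propconvlineardiffeq}.

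Items (iii) and (iv) follow quickly. Since $\E_T = \mathrm{ev}_T \circ \E$ with $\mathrm{ev}_T : \C \to \R^n$, $x \mapsto x(T)$, bounded and linear, the chain rule gives $\D\E_T^\infty(u)(v) = w(T,u,v)$; the variation-of-constants (Duhamel) formula applied to the linearized equation rewrites this as the stated integral against the state-transition matrix $\Phi_u$. For (iv), augment the state by $x^0$ with $\dot{x}^0 = L(x,u,t)$ and $x^0(0) = 0$, so that $\KK(u) = x^0(T,u)$ for the augmented $\C^2$ dynamics $(f,L)$; then (iv) is precisely (iii) for the augmented system, the last component $w^0$ of the augmented variation vector solving the displayed trivial Cauchy problem. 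The main obstacle is (ii): the delicate points are the uniform $o(\Vert v \Vert_{\L^\infty})$ remainder estimate, which relies on careful control of the second-order Taylor remainder of $f$ on the compact tube $\Omega_\eta$, and the operator-norm continuity of the derivative, where the stability of linear Cauchy problems (Proposition~\ref{propconvlineardiffeq}) is the relevant tool.
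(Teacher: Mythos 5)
Your proof is correct and matches the paper's (implicit) approach: the paper does not write out a proof of this proposition, stating only that it ``follows from the Cauchy--Lipschitz (or Picard--Lindel\"of) theory for Carath\'eodory dynamics, in particular from the Gronwall lemma,'' and that it is well known in optimal control theory. Your argument --- tube/continuation plus Gronwall for openness, first-order Taylor expansion with a quadratic remainder controlled by Gronwall for Fr\'echet differentiability, stability of linear Cauchy problems for the $\C^1$ property, and the chain rule with the evaluation map and a state augmentation for items (iii) and (iv) --- is exactly the standard development of that remark.
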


The vectors~$w$ and~$w^0$ are usually called \emph{variation vectors}.
Proposition~\ref{propinputoutput} is well known in optimal control theory, but is not sufficient to prove Proposition~\ref{propsampledfilippov} and Theorem~\ref{thmcv} in Appendix~\ref{appmainresults}: we will need continuity results in~$\L^1$-norm (and not in $\L^\infty$-norm), which require truncature techniques introduced next.

\subsection{Truncation and continuity in $\L^1$-norm}\label{sectruncated}

For every $R > 0$, let $\xi^R : \R^n \times \R^m \to \R$ be a function of class $\C^2$ such that
$$
\xi^R(x,u) = \left\lbrace \begin{array}{l}
1 \text{ if } (x,u) \in \BB_{\R^n}(0_{\R^n},2R) \times \BB_{\R^m}(0_{\R^m},2R), \\[3pt]
0 \text{ if } (x,u) \notin \B_{\R^n}(0_{\R^n},3R)  \times \B_{\R^m}(0_{\R^m},3R) .
\end{array} \right.
$$
We define the \emph{truncated dynamics} $f^R : \R^n \times \R^m \times [0,T] \to \R^n$  by $ f^R (x,u,t) := \xi^R(x,u) f(x,u,t) $ for all~$(x,u,t) \in \R^n \times \R^m \times [0,T]$. Accordingly, we denote with an upper~$R$ all objects considered previously, now for the truncated dynamics~$f^R$: the Cauchy problem~(CP${}^R_u$), the set $\UU^R$, the state~$x^R(\cdot,u)$ for any $u \in \UU^R$, the input-ouput maps $\E^R$ and $\E_T^R$, the cost function~$\KK^R$ associated with the truncated Lagrange function $L^R$ defined similarly to~$f^R$, etc. 
The next result follows from the Cauchy-Lipschitz (or Picard-Lindel\"of) theory for Carath\'eodory dynamics, in particular from the Gronwall lemma.

\begin{proposition}\label{propinputoutputR}
Given any $R > 0$: 
\begin{enumerate}
\item[\rm{(i)}] $\DD(\KK^R) = \UU^R = \L^1 $;
\item[\rm{(ii)}] $\E^R$, $\E_T^R$ and $\KK^R$ are continuous in $\L^1$-norm.
\end{enumerate}
\end{proposition}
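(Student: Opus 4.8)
The plan is to exploit the single structural feature that truncation buys us: since $\xi^R$ vanishes outside $\B_{\R^n}(0_{\R^n},3R) \times \B_{\R^m}(0_{\R^m},3R)$ and $f$, $L$ are of class $\C^2$, the truncated dynamics $f^R$ and the truncated Lagrange function $L^R$ are \emph{globally bounded} and \emph{globally Lipschitz continuous in $(x,u)$, uniformly in $t \in [0,T]$}. Indeed, for every $t$ the functions $f^R(\cdot,\cdot,t)$ and $L^R(\cdot,\cdot,t)$ vanish outside the fixed compact set $\BB_{\R^n}(0_{\R^n},3R) \times \BB_{\R^m}(0_{\R^m},3R)$, and on the compact $\BB_{\R^n}(0_{\R^n},3R) \times \BB_{\R^m}(0_{\R^m},3R) \times [0,T]$ the functions $f$, $L$, $\xi^R$ and their first-order derivatives are continuous, hence bounded. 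Thus there exist constants $M_R \geq 0$ and $\ell_R \geq 0$ such that $\vert f^R \vert \leq M_R$ and $\vert L^R \vert \leq M_R$ everywhere, and such that $f^R(\cdot,\cdot,t)$ and $L^R(\cdot,\cdot,t)$ are $\ell_R$-Lipschitz in $(x,u)$, uniformly in $t$.

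For item (i), I would fix $u \in \L^1$ and view the Cauchy problem (CP${}^R_u$) through its right-hand side $g(t,x) := f^R(x,u(t),t)$. This $g$ is a Carath\'eodory function: measurable in $t$ (since $u$ is measurable and $f^R$ is continuous), $\ell_R$-Lipschitz in $x$ uniformly in $t$, and dominated by the constant $M_R \in \L^1([0,T],\R)$. The Cauchy-Lipschitz (or Picard-Lindel\"of) theory for Carath\'eodory dynamics then yields a unique solution $x^R(\cdot,u) \in \AC$ defined on all of $[0,T]$, the global boundedness precluding any blow-up; hence $u \in \UU^R$ and $\UU^R = \L^1$. Since moreover $L^R(x^R(\cdot,u),u,\cdot)$ is bounded by $M_R$, it lies in $\L^\infty \subset \L^1$, so $u \in \DD(\KK^R)$ and $\DD(\KK^R) = \L^1$.

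For item (ii), I would take $u_k \to u$ in $\L^1$ and write the integral form $x_k(t) - x(t) = \int_0^t \big( f^R(x_k(s),u_k(s),s) - f^R(x(s),u(s),s) \big) \, ds$, where $x_k := x^R(\cdot,u_k)$ and $x := x^R(\cdot,u)$. The heart of the argument is to split the integrand as $\big[ f^R(x_k(s),u_k(s),s) - f^R(x(s),u_k(s),s) \big] + \big[ f^R(x(s),u_k(s),s) - f^R(x(s),u(s),s) \big]$: the first bracket is bounded by $\ell_R \vert x_k(s)-x(s) \vert$, while the integral over $[0,T]$ of the norm of the second bracket is bounded by $\ell_R \Vert u_k - u \Vert_{\L^1} \to 0$. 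The Gronwall lemma then gives $\Vert x_k - x \Vert_{\C} \leq \ell_R \Vert u_k - u \Vert_{\L^1} \, e^{\ell_R T} \to 0$, which proves continuity of $\E^R$ in $\L^1$-norm and, a fortiori, of $\E_T^R$. Continuity of $\KK^R$ follows from the same splitting applied to $L^R$: the $x$-increment is controlled by the now-established uniform convergence $\Vert x_k - x \Vert_{\C} \to 0$, and the $u$-increment by $\ell_R \Vert u_k - u \Vert_{\L^1} \to 0$.

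The main obstacle, and the very reason truncation is introduced, is that we need continuity in the coarser $\L^1$-norm rather than in the $\L^\infty$-norm of Proposition~\ref{propinputoutput}. Bounding $\int_0^T \Vert f^R(x(s),u_k(s),s) - f^R(x(s),u(s),s) \Vert_{\R^n} \, ds$ by $\ell_R \Vert u_k - u \Vert_{\L^1}$ crucially uses a \emph{global} Lipschitz constant in $u$, uniform in $(x,t)$; for the untruncated $f$ only a local Lipschitz bound is available, and since the $u_k$ may take arbitrarily large values on sets of small measure, one cannot control this term by $\Vert u_k - u \Vert_{\L^1}$. Truncation removes exactly this difficulty, which is what makes the continuity in $\L^1$-norm hold.
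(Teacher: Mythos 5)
Your proof is correct and is precisely the argument the paper has in mind: the paper states only that the result ``follows from the Cauchy--Lipschitz (or Picard--Lindel\"of) theory for Carath\'eodory dynamics, in particular from the Gronwall lemma,'' and your write-up supplies exactly those details (global boundedness and global Lipschitz continuity of $f^R$, $L^R$ in $(x,u)$ uniformly in $t$, Carath\'eodory existence/uniqueness for (i), and the splitting plus Gronwall for (ii)). Your closing remark on why the $\L^1$-continuity fails without truncation correctly identifies the purpose of Section~\ref{sectruncated}.
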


\begin{remark}\label{remtruncated}
\begin{enumerate}
\item[\rm{(i)}] Let $u \in \UU \cap \L^\infty$ and $R > 0$ be such that~$\Vert x(\cdot,u) \Vert_{\C} \leq R$ and $\Vert u \Vert_{\L^\infty} \leq R$. Then $x^R(\cdot,u) = x(\cdot,u)$.
\item[\rm{(ii)}] Conversely, let $u \in \L^\infty$ and assume that there exists~$R > 0$ such that~$\Vert x^R(\cdot,u) \Vert_{\C} \leq R$ and $\Vert u \Vert_{\L^\infty} \leq R$. Then $u \in \UU$ and~$x(\cdot,u) = x^R(\cdot,u)$.
\end{enumerate}
\end{remark}

\subsection{Characterizations  of extremal lifts}\label{seccharacterization}

\begin{lemma}\label{lemcharactweakextremal}
Let $(x,u) \in \AA$ and~$(p,p^0) \in \AC \times \R_-$ be a nontrivial pair satisfying the adjoint equation~\eqref{eqAE}. Then~$(p,p^0)$ is a weak extremal lift of~$(x,u)$ if and only if 
\begin{equation}\label{ineq01} 
\langle p(T) , w(T,u,v-u) \rangle_{n} + p^0 w^0(T,u,v-u) \leq 0 
\end{equation}
for every $v \in \L^\infty_\U$, where $w$, $w^0$ are defined in Proposition~\ref{propinputoutput}.
\end{lemma}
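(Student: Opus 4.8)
The plan is to collapse the two requirements defining a weak extremal lift (the adjoint equation, which is given, and the Hamiltonian gradient condition \eqref{eqHG}) into a single scalar identity, and then to pass between the integral inequality \eqref{ineq01} and its pointwise counterpart by a projection argument.

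First I would establish the central identity
$$ \langle p(T) , w(T,u,v-u) \rangle_{n} + p^0 w^0(T,u,v-u) = \int_0^T \langle \nabla_u H(x(s),u(s),p(s),p^0,s) , v(s)-u(s) \rangle_{m} \; ds $$
for every $v \in \L^\infty_\U$. To obtain it, differentiate $t \mapsto \langle p(t),w(t,u,v-u) \rangle_{n}$, substituting the adjoint equation \eqref{eqAE} for $\dot p$ and the linearized dynamics of Proposition~\ref{propinputoutput} for $\dot w$. Writing $\langle \nabla_x f^\top p , w \rangle_n = \langle p , \nabla_x f\, w \rangle_n$, the two $\langle p , \nabla_x f\, w \rangle_n$ terms cancel and one is left with $\frac{d}{dt}\langle p,w \rangle_n = -p^0 \langle \nabla_x L , w \rangle_n + \langle p , \nabla_u f (v-u) \rangle_n$. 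Integrating over $[0,T]$ (recall $w(0)=0_{\R^n}$), then adding $p^0 w^0(T,u,v-u)$ in the explicit integral form given by Proposition~\ref{propinputoutput}(iv), the $p^0 \langle \nabla_x L , w \rangle_n$ terms cancel, and $\langle p , \nabla_u f (v-u) \rangle_n = \langle \nabla_u f^\top p , v-u \rangle_m$ recombines with $p^0 \langle \nabla_u L , v-u \rangle_m$ into $\langle \nabla_u H , v-u \rangle_m$. This is a routine integration-by-parts computation.

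Via this identity the left-hand side of \eqref{ineq01} equals $\int_0^T \langle h(s) , v(s)-u(s) \rangle_m \, ds$, where $h(t) := \nabla_u H(x(t),u(t),p(t),p^0,t) \in \L^\infty$ (continuous $x,p$, $u \in \L^\infty$, $\C^2$ data). The statement thus reduces to: $h(t) \in \NN_\U[u(t)]$ for a.e.\ $t$ if and only if $\int_0^T \langle h , v-u \rangle_m \leq 0$ for all $v \in \L^\infty_\U$. The forward implication is immediate from the definition of the normal cone to the convex set $\U$: if \eqref{eqHG} holds a.e.\ then $\langle h(t) , \omega - u(t) \rangle_m \leq 0$ for every $\omega \in \U$, and choosing $\omega = v(t) \in \U$ and integrating yields \eqref{ineq01}.

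The main obstacle is the converse, where a single global inequality must be upgraded to an almost-everywhere pointwise one. I would argue by contradiction: set $\omega^*(t) := \proj_\U(u(t)+h(t))$, which is measurable (projection onto a closed convex set is $1$-Lipschitz, so $\omega^*$ is the composition of a Lipschitz map with the measurable $u+h$) and bounded, hence $\omega^* \in \L^\infty_\U$. The obtuse-angle characterization of the projection, tested against $\omega = u(t)$, gives $\langle h(t) , \omega^*(t)-u(t) \rangle_m \geq \Vert \omega^*(t)-u(t) \Vert_{\R^m}^2 \geq 0$, with strict inequality exactly on $E := \{ t : h(t) \notin \NN_\U[u(t)] \} = \{ t : \omega^*(t) \neq u(t) \}$ (using that $h(t)\in\NN_\U[u(t)]$ iff $\proj_\U(u(t)+h(t)) = u(t)$). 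If $E$ had positive measure, then taking $v = \omega^*$ would force $\int_0^T \langle h , v-u \rangle_m \geq \int_E \Vert \omega^*-u \Vert_{\R^m}^2 > 0$, contradicting \eqref{ineq01}; hence $E$ is null and \eqref{eqHG} holds a.e., so $(p,p^0)$ is a weak extremal lift. Verifying that $\omega^*$ is a genuine competitor in $\L^\infty_\U$ and extracting strict positivity on $E$ is the delicate point of the argument.
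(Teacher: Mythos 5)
Your proof is correct. The first half — the identity expressing $\langle p(T),w(T,u,v-u)\rangle_n + p^0 w^0(T,u,v-u)$ as $\int_0^T \langle \nabla_u H, v-u\rangle_m\,ds$ via differentiation of $\langle p,w\rangle_n$ and cancellation of the $\nabla_x$ terms, and the forward implication by integrating the pointwise normal-cone inequality — is exactly the paper's argument (the paper packages the identity as $\dot z_v = \langle \nabla_u H, v-u\rangle_m$ for $z_v := \langle p,w\rangle_n + p^0 w^0$). Where you genuinely diverge is the converse. The paper localizes the integral inequality by needle-like perturbations: for fixed $\omega\in\U$ it replaces $u$ by $\omega$ on $[\tau,\tau+\eps)$ and lets $\eps\to 0^+$ at Lebesgue points of $\nabla_u H(x,u,p,p^0,\cdot)$ and of $\langle \nabla_u H(x,u,p,p^0,\cdot),u\rangle_m$, recovering \eqref{eqHG} almost everywhere. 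You instead build the single competitor $v=\omega^*:=\proj_\U(u+h)$ and use the obtuse-angle inequality $\langle h,\omega^*-u\rangle_m \geq \Vert\omega^*-u\Vert_{\R^m}^2$ together with the equivalence $h(t)\in\NN_\U[u(t)] \Leftrightarrow \proj_\U(u(t)+h(t))=u(t)$ to force the exceptional set to be null. Both are valid under the paper's standing hypotheses ($\U$ closed and convex, $h\in\L^\infty$ since $x$, $p$ are continuous and $u\in\L^\infty$). Your route avoids the Lebesgue differentiation theorem entirely and handles all $\omega\in\U$ at once rather than one test direction at a time, at the price of the (routine) measurability and boundedness checks for $\omega^*$; the paper's needle argument is the more traditional optimal-control device and extends verbatim to the sampled-data analogue (Lemma~\ref{lemcharactsampledextremal}), where the perturbation is supported on a whole sampling interval rather than a shrinking one.
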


\begin{proof}
Given any $v \in \L^\infty_\U$, we denote by~$z_v \in \AC([0,T],\R)$ the function defined by~$z_v(t) := \langle p(t) , w(t,u,v-u) \rangle_{n} + p^0 w^0 (t,u,v-u)$ for every~$t \in [0,T]$, which satisfies $z_v(0)=0$. Inequality~\eqref{ineq01} can be rewritten as $z_v(T) \leq 0$ and  
$$ \dot{z}_v(t) = \langle \nabla_u H(x(t),u(t),p(t),p^0,t) , v(t)-u(t) \rangle_{m} $$ 
for almost every $t \in [0,T]$ and for every $v \in \L^\infty_\U$. Assume that~$(p,p^0)$ is a weak extremal lift of~$(x,u)$. Let $v \in \L^\infty_\U$. From the Hamiltonian gradient condition~\eqref{eqHG} we get that~$\dot{z}_v (t) \leq 0$ for almost every $t \in [0,T]$ and thus $z_v(T) \leq 0$, which gives the first part of the proof. Conversely, we infer from~\eqref{ineq01} that
$$
\int_0^T \langle \nabla_u H (x(s),u(s),p(s),p^0,s) , v(s)-u(s) \rangle_{m} \; ds 
= \int_0^T \dot{z}_v (s) \; ds = z_v(T)-z_v(0) = z_v (T) \leq 0
$$
for every $v \in \L^\infty_\U$. Then, for any $\omega \in \U$ and any~$\t \in [0,T)$ that is a Lebesgue point of~$\nabla_u H (x,u,p,p^0,\cdot) \in \L^\infty$ and of~$\langle \nabla_u H (x,u,p,p^0,\cdot),u \rangle_{m} \in \L^\infty([0,T],\R)$, we take $v \in \L^\infty_\U$ as the needle-like perturbation of~$u$ given by
$$ v(t) := \left\lbrace \begin{array}{lcl}
\omega & \text{if} & t \in [\t,\t+\eps), \\[3pt]
u(t) & \text{if} & t \notin [\t,\t+\eps),
\end{array}
\right. $$
for almost every $t \in [0,T]$ and all $0 < \eps \leq T-\t$. Then
$$ \dfrac{1}{\eps} \int_\t^{\t+\eps} \langle \nabla_u H (x(s),u(s),p(s),p^0,s) , \omega-u(s) \rangle_{m} \; ds \leq 0 $$
when $0 < \eps \leq T-\t$. Taking the limit $\eps \to 0^+$ gives~\eqref{eqHG}. 
\end{proof}

\begin{lemma}\label{lemcharactsampledextremal}
Let $\T \in \PP$, $(x,u) \in \AA_\T$ and~$(p,p^0) \in \AC \times \R_-$ be a nontrivial pair satisfying the adjoint equation~\eqref{eqAE}. Then~$(p,p^0)$ is a $\T$-averaged weak extremal lift of~$(x,u)$ if and only if 
\begin{equation}\label{ineq02}
\langle p(T) , w(T,u,v-u) \rangle_{n} + p^0 w^0(T,u,v-u) \leq 0 ,
\end{equation}
for every $v \in \PC^\T_\U$, where $w$, $w^0$ are defined in Proposition~\ref{propinputoutput}.
\end{lemma}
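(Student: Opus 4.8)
The plan is to follow closely the proof of Lemma~\ref{lemcharactweakextremal}, exploiting that the only change is the restriction of the test functions to the finite-dimensional set $\PC^\T_\U$, which will render the localization to individual sampling intervals entirely elementary.

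First I would introduce, for a given $v \in \PC^\T_\U$, exactly the same auxiliary function as before, namely $z_v(t) := \langle p(t) , w(t,u,v-u) \rangle_{n} + p^0 w^0(t,u,v-u)$ for $t \in [0,T]$, which satisfies $z_v(0)=0$. The computation carried out in the proof of Lemma~\ref{lemcharactweakextremal} — differentiating $z_v$ and using the adjoint equation~\eqref{eqAE} together with the linear Cauchy problems defining $w$ and $w^0$ in Proposition~\ref{propinputoutput}, so that the terms in $\nabla_x f \cdot w$ and in $p^0 \nabla_x L \cdot w$ cancel — applies verbatim and yields
$$ \dot{z}_v(t) = \langle \nabla_u H(x(t),u(t),p(t),p^0,t) , v(t)-u(t) \rangle_{m} $$
for almost every $t \in [0,T]$. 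Integrating and using $z_v(0)=0$, inequality~\eqref{ineq02} is equivalent to $\int_0^T \langle \nabla_u H(x(s),u(s),p(s),p^0,s) , v(s)-u(s) \rangle_{m} \, ds \leq 0$.

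The key new step is to exploit the piecewise constant structure. Since both $u$ and $v$ belong to $\PC^\T_\U$, writing $u(s)=u_i$ and $v(s)=v_i$ (with $u_i,v_i \in \U$) on each interval $[t_i,t_{i+1})$, the integral splits as a finite sum and the constant increment $v_i-u_i$ factors out:
$$ \int_0^T \langle \nabla_u H , v-u \rangle_m \, ds = \sum_{i=0}^{N-1} \left\langle \int_{t_i}^{t_{i+1}} \nabla_u H(x(s),u_i,p(s),p^0,s) \, ds , \; v_i - u_i \right\rangle_m . $$
Denoting by $G_i$ the inner integral, the right-hand side equals $\sum_{i=0}^{N-1} \langle G_i , v_i - u_i \rangle_m$, and the $\T$-averaged condition~\eqref{eqAHG} is precisely the statement that $G_i \in \NN_\U[u_i]$ for every $i$, i.e.\ that $\langle G_i , \omega - u_i \rangle_m \leq 0$ for all $\omega \in \U$ and all $i$.

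From here both implications are immediate. For the forward implication, if~\eqref{eqAHG} holds then each summand $\langle G_i , v_i - u_i \rangle_m$ is nonpositive (as $v_i \in \U$), whence the sum is nonpositive and~\eqref{ineq02} follows. For the converse, to recover membership of a single $G_{i_0}$ in the normal cone I would use — in place of the needle-like perturbation of Lemma~\ref{lemcharactweakextremal} — the test function $v \in \PC^\T_\U$ defined by $v_{i_0}=\omega$ for a fixed $\omega \in \U$ and $v_i=u_i$ otherwise; then all summands vanish except the $i_0$-th, so~\eqref{ineq02} reduces to $\langle G_{i_0} , \omega - u_{i_0} \rangle_m \leq 0$, and since $\omega \in \U$ and $i_0$ are arbitrary this gives~\eqref{eqAHG}. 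The point to stress is therefore not an obstacle but a simplification: the finite-dimensional nature of $\PC^\T_\U$ permits exact localization on each sampling interval by a single admissible test value, so that, in contrast with the permanent case, no Lebesgue-point or limiting argument in $\eps$ is required.
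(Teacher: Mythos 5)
Your proposal is correct and follows essentially the same route as the paper: the same auxiliary function $z_v$, the same identity $\dot{z}_v = \langle \nabla_u H , v-u \rangle_m$, and the same single-interval test function $v_{i_0}=\omega$, $v_i=u_i$ for the converse. The only cosmetic difference is that you phrase the forward direction via the decomposition $\int_0^T \dot z_v = \sum_i \langle G_i, v_i-u_i\rangle_m$ rather than via the telescoping sum of the increments $z_v(t_{i+1})-z_v(t_i)\leq 0$, which is the same computation.
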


\begin{proof}
We denote by~$\T = \{ t_i \}_{i=0,\ldots,N}$ and, for every $v \in \PC^\T_\U$, we use the notation~$z_v \in \AC([0,T],\R)$ introduced in the proof of Lemma~\ref{lemcharactweakextremal}. Assume that~$(p,p^0)$ is a $\T$-averaged weak extremal lift of~$(x,u)$. Let $v \in  \PC^\T_\U$. We infer from~\eqref{eqAHG} that~$z_v(t_{i+1}) - z_v(t_i) \leq 0$ for every $i \in \{ 0,\ldots,N-1 \}$. With a telescoping sum we deduce that~$z_v(T) \leq 0$, which gives the first part of the proof. Conversely, we infer from~\eqref{ineq02} that
$$
\int_0^T \langle \nabla_u H (x(s),u(s),p(s),p^0,s) , v(s)-u(s) \rangle_{m} \; ds 
= \int_0^T \dot{z}_v (s) \; ds = z_v(T)-z_v(0) = z_v (T) \leq 0
$$
for every $v \in \PC^\T_\U $. Then, for any $\omega \in \U$ and any~$i \in \{ 0,\ldots,N-1 \}$, we take~$v \in \PC^\T_\U$ given by
$$ v(t) := \left\lbrace \begin{array}{lcl}
\omega & \text{if} & t \in [t_i,t_{i+1}), \\[3pt]
u(t) & \text{if} & t \notin [t_i,t_{i+1}),
\end{array}
\right. $$
for almost every $t \in [0,T]$. We thus obtain~\eqref{eqAHG}. 
\end{proof}

\section{Proofs of Filippov-type results}\label{appfilippov}
This section is dedicated to the statement of a Filippov-type lemma (Lemma~\ref{lemtech1} below), a technical argument that will be used several times. For the reader's convenience, we recall the proof of the well known Filippov theorem mentioned in Section~\ref{secOCP}.

\subsection{A technical Filippov-type lemma}\label{apptechnicallemma}

Let $d \in \N^*$ and let~$F : \R^n \times \R^m \times [0,T] \to \R^d$ be a continuous function, differentiable with respect to its first variable and such that~$\nabla_x F$ is continuous. Let $\Lambda$ be a closed subset of~$\R^d$ containing $0_{\R^d}$. We introduce the convexity assumption
\begin{equation}\label{eqhconv3}\tag{$\mathrm{H}^{\textrm{conv}}_{F}$}
\forall (x,t) \in \R^n \times [0,T], \; \VV_{F} (x,t) \text{ is convex}, 
\end{equation}
where
$$ \VV_{F} (x,t) := \{ ( f(x,u,t) , F(x,u,t) + \lambda ) \ \mid\ (u,\lambda) \in \U \times \Lambda \} $$
for all $(x,t) \in \R^n \times [0,T]$.

\begin{lemma}\label{lemtech1}
Assume that $\AA \neq \emptyset$. Under \eqref{eqhcomp} and~\eqref{eqhconv3}, every sequence~$(u_k)_{k \in \N}$ in $\UUadm$ has a subsequence (that we do not relabel) such that:
\begin{enumerate}
\item[\rm{(i)}] $x(\cdot,u_k)$ converges uniformly on $[0,T]$ to $x(\cdot,\overline{u})$;
\item[\rm{(ii)}] $f(x(\cdot,u_k),u_k,\cdot)$ converges weakly-star in $\L^\infty([0,T],\R^n)$ to $f(x(\cdot,\overline{u}),\overline{u},\cdot)$;
\item[\rm{(iii)}] $F(x(\cdot,u_k),u_k,\cdot)$ converges weakly-star in $\L^\infty([0,T],\R^d)$ to $F(x(\cdot,\overline{u}),\overline{u},\cdot) + \overline{\lambda}$;
\end{enumerate}
for some $\overline{u} \in \UUadm$ and some $\overline{\lambda} \in \L^\infty([0,T],\Lambda)$.
\end{lemma}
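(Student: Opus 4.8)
The plan is to run the classical Filippov closure argument, now carried out simultaneously for $f$ and for the extended field $F$. Write $x_k := x(\cdot,u_k)$ and bundle $\phi_k := (f(x_k,u_k,\cdot),F(x_k,u_k,\cdot)) \in \L^\infty([0,T],\R^{n+d})$. First I would exploit~\eqref{eqhcomp}: since $\U$ is compact and $\Vert x_k \Vert_\C \le R$ for every $k$, the triples $(x_k(t),u_k(t),t)$ remain in the compact set $\BB_{\R^n}(0_{\R^n},R)\times\U\times[0,T]$, on which the continuous maps $f$ and $F$ are bounded, say by $M$. Hence $(\phi_k)$ is bounded in $\L^\infty$, and $\dot x_k = f(x_k,u_k,\cdot)$ is bounded by $M$, so the $x_k$ are equibounded and equi-Lipschitz. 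By Arzel\`a--Ascoli a subsequence of $(x_k)$ converges uniformly to some Lipschitz $y\in\C$ (with $\Vert y\Vert_\C\le R$), and by weak-star sequential compactness of bounded subsets of $\L^\infty=(\L^1)'$ (using separability of $\L^1$) a further subsequence gives $f(x_k,u_k,\cdot)\rightharpoonup^* g$ and $F(x_k,u_k,\cdot)\rightharpoonup^* h$. Testing the Duhamel identity $x_k(t)=x_0+\int_0^T \mathbbm{1}_{[0,t]}(s)\,f(x_k(s),u_k(s),s)\,ds$ against $\mathbbm{1}_{[0,t]}\in\L^1$ and passing to the limit yields $y(t)=x_0+\int_0^t g(s)\,ds$, so $y\in\AC$ and $\dot y=g$ almost everywhere.

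The heart of the proof is to show that $(g(t),h(t))\in\VV_F(y(t),t)$ for almost every $t$; this is exactly where~\eqref{eqhconv3} is used. Viewing $(\phi_k)$ as bounded in $\L^2([0,T],\R^{n+d})$ and passing to a weakly convergent subsequence (whose limit is necessarily $(g,h)$), Mazur's lemma produces convex combinations $\psi_j=\sum_k \alpha^j_k\phi_k$ (finitely many nonzero weights, indices $k\ge j$) converging strongly in $\L^2$, hence, along a subsequence, pointwise almost everywhere to $(g,h)$. Fix such a point $t$ and fix $\eps>0$. Noting that $\phi_k(t)\in\VV_F(x_k(t),t)$ (with $\lambda=0$) and using $x_k(t)\to y(t)$ together with the uniform continuity of $f$ and $F$ on $\BB_{\R^n}(0_{\R^n},R)\times\U\times[0,T]$, for $k$ large every $\phi_k(t)$ lies within $\eps$ of the point $(f(y(t),u_k(t),t),F(y(t),u_k(t),t))\in\VV_F(y(t),t)$, so $\phi_k(t)\in\VV_F(y(t),t)+\B_{\R^{n+d}}(0_{\R^{n+d}},\eps)$. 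Since $\VV_F(y(t),t)$ is convex by~\eqref{eqhconv3}, this neighborhood is convex, so the convex combinations $\psi_j(t)$ stay in it for $j$ large; letting $j\to\infty$ and then $\eps\to0$, and using that $\VV_F(y(t),t)$ is closed (here compactness of $\U$ and closedness of $\Lambda$ enter), I conclude $(g(t),h(t))\in\VV_F(y(t),t)$.

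It remains to extract measurable selections. For almost every $t$ the set $U(t):=\{u\in\U : f(y(t),u,t)=g(t),\ h(t)-F(y(t),u,t)\in\Lambda\}$ is nonempty, and its graph is measurable because the defining conditions are Carath\'eodory (measurable in $t$, continuous in $u$) and $\Lambda$ is closed. A measurable selection theorem (Filippov's implicit-function lemma, or Aumann's theorem) then furnishes a measurable $\overline u$ with $\overline u(t)\in U(t)$ almost everywhere; putting $\overline\lambda:=h-F(y,\overline u,\cdot)$ gives $\overline\lambda\in\L^\infty([0,T],\Lambda)$, since $\overline u$ is bounded (values in the compact $\U$) and $h\in\L^\infty$. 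Then $g=f(y,\overline u,\cdot)$ shows that $y$ solves $\dot y=f(y,\overline u,\cdot)$, $y(0)=x_0$, so by Cauchy--Lipschitz uniqueness $\overline u\in\UU$ and $y=x(\cdot,\overline u)$; the terminal conditions $y(0)=x_0$ and $y(T)=x_T$ pass to the limit from $x_k(0)=x_0$, $x_k(T)=x_T$ by uniform convergence, whence $(x(\cdot,\overline u),\overline u)\in\AA$, i.e.\ $\overline u\in\UUadm$. This yields (i)--(iii) with the limit $\overline u$ and residual $\overline\lambda$.

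The main obstacle is the closure step of the second paragraph: upgrading the weak-star limits of the pointwise-admissible velocities $\phi_k$ to genuine pointwise membership $(g(t),h(t))\in\VV_F(y(t),t)$. This is precisely what forces~\eqref{eqhconv3} (without convexity one only reaches the closed convex hull of the velocity set), and it requires combining Mazur's lemma with the upper semicontinuity in $x$ of $(x,t)\mapsto\VV_F(x,t)$ coming from the uniform continuity of $f,F$ on compacts, plus the closedness of $\VV_F(y(t),t)$. The subsequent measurable selection is routine but must be arranged so that the residual $\overline\lambda$ lands in the possibly unbounded closed set $\Lambda$, which is why I select $\overline u$ in the compact $\U$ first and define $\overline\lambda$ afterwards.
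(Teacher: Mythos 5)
Your proof is correct, and it reaches the conclusion by the same overall architecture as the paper's: a priori bounds from \eqref{eqhcomp}, weak-star compactness of the bundled velocities, identification of the limit trajectory through the Duhamel formula, a Filippov closure step resting on \eqref{eqhconv3}, and a measurable selection producing $\overline{u}$ and then $\overline{\lambda} := h - F(y,\overline{u},\cdot)$. The one genuinely different ingredient is the implementation of the closure step. You argue pointwise: Mazur's lemma converts the weak $\L^2$ limit into an a.e.\ limit of convex combinations, and the mismatch between $x_k(t)$ and $y(t)$ is absorbed into an $\eps$-neighborhood of the closed convex set $\VV_F(y(t),t)$ using uniform continuity of $f$ and $F$ on compacts. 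The paper instead works at the function-space level: it introduces the set $\HH$ of $\L^2$-selections of $t \mapsto \VV_F(\overline{x}(t),t)$, shows it is closed and convex, hence weakly closed, substitutes the limit state $\overline{x}$ into $f$ and $F$ so that the substituted sequence lies in $\HH$, and checks that this substitution does not change the weak limit because the difference tends to zero strongly (via the boundedness of $\nabla_x f$ and $\nabla_x F$ on the relevant compact). The two devices are interchangeable --- ``closed convex implies weakly closed'' is itself Mazur --- and each buys something small: your pointwise argument uses only the continuity of $F$ (not of $\nabla_x F$) at this stage, whereas the paper's substitution trick is more compact to write down and avoids any pointwise bookkeeping of thresholds $K(t,\eps)$. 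The remaining steps (Himmelberg's selection theorem in the paper versus the Filippov/Aumann lemma in your write-up, the identification $y = x(\cdot,\overline{u})$ by Cauchy--Lipschitz uniqueness, and the passage of the terminal constraint to the limit) coincide.
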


\begin{proof}
Consider a sequence $(u_k)_{k \in \N}$ in $\UUadm$. By~\eqref{eqhcomp}, there exists~$R > 0$ such that~$\Vert x(\cdot,u_k) \Vert_{\C} \leq R$ and~$\Vert u_k \Vert_{\L^\infty} \leq R$ for every~$k \in \N $. The sequence~$(f(x(\cdot,u_k),u_k,\cdot) , F(x(\cdot,u_k),u_k,\cdot))_{k \in \N}$ is bounded in~$\L^\infty([0,T],\R^{n+d})$ and thus (up to a subsequence that we do not relabel) converges weakly-star  in $\L^\infty([0,T],\R^{n+d})$ to some~$(g_1,g_2) \in \L^\infty([0,T],\R^{n+d})$. It follows from the Duhamel formula that the sequence $(x(\cdot,u_k))_{k \in \N}$ converges pointwisely on $[0,T]$ to the function $\overline{x} \in \AC$ defined by~$\overline{x}(t) := x_0 + \int_0^t g_1(s) \, ds$ for every $t \in [0,T]$, and thus uniformly by equi-Lipschitz continuity (as in \cite[Lemma~3]{BT2017}).
In particular $\overline{x}(T) = x_T$ and~$\Vert \overline{x} \Vert_{\C} \leq R$. 
We now define
$$
\HH := \{ h \in \L^2([0,T],\R^{n+d})\ \mid \ 
h(t) \in \VV_F (\overline{x}(t),t) \text{ for a.e.\ } t \in [0,T] \}. 
$$
Since $\U$ is compact and $\Lambda$ is closed, $\VV_F(x,t)$ is a closed (convex) subset of~$\R^{n+d}$ for all~$(x,t) \in \R^n \times [0,T]$. It follows from the partial converse of the Lebesgue dominated convergence theorem that~$\HH$ is a closed convex (and thus weakly closed) subset of~$\L^2([0,T],\R^{n+d})$. Since~$0_{\R^d} \in \Lambda$, the sequence~$(f(\overline{x},u_k,\cdot) , F(\overline{x},u_k,\cdot))_{k \in \N}$ belongs to~$\HH$ and is bounded in $\L^2([0,T],\R^{n+d})$. Thus (up to a subsequence that we do not relabel) it converges weakly in $\L^2([0,T],\R^{n+d})$ to some function~$(\overline{g}_1,\overline{g}_2)\in \L^2([0,T],\R^{n+d})$ which belongs to $\HH$. Since $\nabla_x f$ and $\nabla_x F$ are bounded on the compact set~$\BB_{\R^n}(0_{\R^n},R) \times \BB_{\R^m}(0_{\R^m},R) \times [0,T]$, there exists $M \geq 0$ such that
\begin{multline*}
\Vert (f(x(t,u_k),u_k(t),t) , F(x(t,u_k),u_k(t),t)) 
- (f(\overline{x}(t),u_k(t),t) , F(\overline{x}(t),u_k(t),t)) \Vert_{\R^{n+d}} \\[3pt]
 \leq M \Vert x(t,u_k) - \overline{x}(t) \Vert_{\R^n} 
\end{multline*}
for almost every $t \in [0,T]$ and all $k \in \N$. Using the above inequality and the Lebesgue dominated convergence theorem, the sequence
$$
\Big( (f(x(\cdot,u_k),u_k,\cdot) , F(x(\cdot,u_k),u_k,\cdot)) 
- (f(\overline{x},u_k,\cdot) , F(\overline{x},u_k,\cdot)) \Big)_{k \in \N}
$$
converges in $\L^2([0,T],\R^{n+d})$ to the null function. Since it also weakly converges in~$\L^2([0,T],\R^{n+d})$ to $(g_1,g_2) - (\overline{g}_1,\overline{g}_2)$, we conclude that $(g_1,g_2) = (\overline{g}_1,\overline{g}_2) \in \HH$. By the measurable selection theorem~\cite[Theorem~7.1]{himmelberg1975}, since $\U$ and $\Lambda$ are closed, there exist two measurable functions~$\overline{u} : [0,T] \to \U$ and~$\overline{\lambda} : [0,T] \to \Lambda$ such that
$$ (g_1(t),g_2(t)) = (f(\overline{x}(t),\overline{u}(t),t),F (\overline{x}(t),\overline{u}(t),t) + \overline{\lambda}(t) ) $$
for almost every $t \in [0,T]$. Since $\U$ is bounded, we get that~$\overline{u} \in \L^\infty_\U$ and we infer from the Duhamel formula that~$\overline{x} = x(\cdot,\overline{u}) $ and thus $\overline{u} \in \UUadm$. Moreover $\overline{\lambda} = g_2 - F(\overline{x},\overline{u},\cdot) \in \L^\infty([0,T],\Lambda)$. The proof is complete. 
\end{proof}

\subsection{Proof of the Filippov theorem mentioned in Section~\ref{secOCP}}
Since $\AA \neq \emptyset$, we have~$\UUadm \neq \emptyset$ (see Remark~\ref{rem587}). The infimum value of Problem~\eqref{theproblem} is~$\inf_{u \in \UUadm} \KK(u)$. Consider a minimizing sequence~$(u_k)_{k \in \N} \subset \UUadm$. By Lemma~\ref{lemtech1} applied with~$d = 1$, $F = L$ and~$\Lambda = \R_+$, $(u_k)_{k \in \N}$ has a subsequence (that we do not relabel) such that~$L(x(\cdot,u_k),u_k,\cdot)$ converges weakly-star in $\L^\infty([0,T],\R)$ to $L(x(\cdot,\overline{u}),\overline{u},\cdot) + \overline{\gamma}$, where~$\overline{u} \in \UUadm$ and~$\overline{\gamma} \in \L^\infty([0,T],\R_+)$. Hence $\inf_{u \in \UUadm} \KK(u) = \lim_{k \to \infty} \KK(u_k) = \int_0^T (L(x(s,\overline{u}),\overline{u}(s),s) + \overline{\gamma}(s)) \, ds \geq \KK(\overline{u})$, which concludes the proof. 

\section{Proofs of Proposition~\ref{propsampledfilippov} and Theorem~\ref{thmcv}}\label{appmainresults}
We use the notations and results of Appendices~\ref{app1}, \ref{appinput} and~\ref{appfilippov}. We set $\UU^\T := \UU \cap \PC^\T$ and $\UUadm^\T := \UUadm \cap \PC^\T$ for every~$\T \in \PP$. As to Remark~\ref{rem587}, given any $\T \in \PP$, we have~$\AA_\T \neq \emptyset$ if and only if~$\UUadm^\T \neq \emptyset$.

\subsection{Proof of Proposition~\ref{propsampledfilippov}}
Since $\AA_\T \neq \emptyset$ for some~$\T \in \PP$, we have $\UUadm^\T \neq \emptyset$. The infimum value of Problem~\eqref{thesampledproblem} is~$\inf_{u \in \UUadm^\T} \KK(u)$. Consider a minimizing sequence~$(u_k)_{k \in \N} \subset \UUadm^\T$. By~\eqref{eqhcomp}, there exists $R > 0$ such that $\Vert x(\cdot,u_k) \Vert_{\C} \leq R$ and~$\Vert u_k \Vert_{\L^\infty} \leq R$ for every $k \in \N$. By Remark~\ref{remtruncated}, $x(\cdot,u_k) = x^R(\cdot,u_k) = \E^R(u_k)$ for every~$k \in \N$. Since~$\U $ is compact and~$\PC^\T$ is a finite-dimensional space, there exists a subsequence (that we do not relabel) such that $(u_k)_{k \in \N}$ converges in~$\L^\infty$ (and thus in~$\L^1$) to some~$\overline{u} \in \PC^\T_{\U}$ which moreover satisfies~$\Vert \overline{u} \Vert_{\L^\infty} \leq R$. By continuity of~$\E^R$ in $\L^1$-norm (see Proposition~\ref{propinputoutputR}), we get that~$\Vert x^R(\cdot,\overline{u}) \Vert_{\C} \leq R$. Using again Remark~\ref{remtruncated}, we get that~$\overline{u} \in \UU$ and $x(\cdot,\overline{u}) = x^R(\cdot,\overline{u})$. Similarly, by continuity of $\E^R_T$ in $\L^1$-norm and since~$x_T = x(T,u_k) = x^R(T,u_k) = \E^R_T(u_k)$ for every~$k \in \N$, we infer that~$x_T = \E^R_T(\overline{u}) = x^R(T,\overline{u}) = x(T,\overline{u})$. At this step we have proved that~$\overline{u} \in \UUadm^\T$. To conclude, we recall that~$\lim_{k \to \infty} \KK(u_k) = \inf_{u \in \UUadm^\T} \KK(u)$ and we use the facts that $ \KK(\overline{u}) = \KK^R(\overline{u})$ and $\KK (u_k) = \KK^R(u_k)$ for every~$k \in \N$ and the continuity of $\KK^R$ in $\L^1$-norm. Precisely, we write~$\inf_{u \in \UUadm^\T} \KK(u) = \lim_{k \to \infty} \KK(u_k) = \lim_{k \to \infty} \KK^R(u_k) = \KK^R(\overline{u}) = \KK(\overline{u})$. The proof is complete.

\begin{remark}
By the above proof, one can note that, actually, Proposition~\ref{propsampledfilippov} remains valid if the assumption \eqref{eqhcomp} is weakened to:
$$ \exists R > 0, \; \forall (x,u) \in \AA_\T, \; \Vert x \Vert_\C + \Vert u \Vert_{\L^\infty} \leq R . $$
\end{remark}

\subsection{Proof of the first part of Theorem~\ref{thmcv}}\label{appfaible}

By Lemma~\ref{lemmaaccessible}, there exists $\delta > 0$ such that~$\AA_\T \neq \emptyset$ for every~$\T \in \PP$ satisfying~$\Vert \T \Vert \leq \delta$. By~\eqref{eqhcomp} and Proposition~\ref{propsampledfilippov}, Problem~\eqref{thesampledproblem} has at least one solution for every~$\T \in \PP$ satisfying~$\Vert \T \Vert \leq \delta$. Let~$(x^*_\T,u^*_\T)$ be a family of solutions to~\eqref{thesampledproblem} for every~$\T \in \PP$ satisfying~$\Vert \T \Vert \leq \delta$.

By a standard argument of unique closure point, it suffices to prove that~(i) and~(ii) are satisfied for at least one subsequence of any sequence~$(\T_k)_{k \in \N}$ in $\PP$ satisfying~$\Vert \T_k \Vert_{\PP} \leq \delta$ for every~$k \in \N$ and~$\lim_{k \to \infty} \Vert \T_k \Vert = 0$. Let~$(\T_k)_{k \in \N}$ be such a sequence.
In what follows, for the ease of notations, we denote~$u^*_k := u^*_{\T_k}$ and~$x^*_k := x^*_{\T_k} = x(\cdot,u^*_k)$ for every~$k \in \N$. By Lemma~\ref{lemtech1} applied with $d = 1$, $F=L$ and $\Lambda = \R_+$, we get that~$(u^*_k)_{k \in \N}$ has a subsequence (that we do not relabel) such that $x^*_k$ converges uniformly to~$x(\cdot,\overline{u})$ on $[0,T]$, $f(x^*_k,u^*_k,\cdot)$ converges weakly-star in~$\L^\infty([0,T],\R^n)$ to $f(x(\cdot,\overline{u}),\overline{u},\cdot)$ and $L(x^*_k,u^*_k,\cdot)$ converges weakly-star in~$\L^\infty([0,T],\R)$ to $L(x(\cdot,\overline{u}),\overline{u},\cdot) + \overline{\gamma}$, where~$\overline{u} \in \UUadm$ and~$\overline{\gamma} \in \L^\infty([0,T],\R_+)$. 

It suffices to prove that~$\overline{u} = u^*$ and that~$\overline{\gamma}$ is the null function. Let us prove that~$\overline{u}$ is a solution to~\eqref{theproblem}. First, by~\eqref{eqhcomp}, there exists~$R >0$ such that~$\Vert x(\cdot,u ) \Vert_\C \leq R$ and~$\Vert u \Vert_{\L^\infty} \leq R$ for every~$u \in \UUadm$. Consider a sequence~$(\rho_k)_{k \in \N}$ of positive real numbers converging to zero, and consider the corresponding sequence~$(\delta_k)_{k \in \N}$ of positive real numbers provided in Lemma~\ref{lemmaaccessible} associated with~$(x^*,u^*) \in \AA$ which does not have any abnormal strong extremal lift. Since~$\lim_{k \to \infty} \Vert \T_k \Vert = 0$, up to a subsequence (that we do not relabel), we have~$\Vert \T_k \Vert \leq \delta_k$ for every~$k \in \N$. By Lemma~\ref{lemmaaccessible}, there exists~$v_k \in \UUadm^{\T_k} $ such that~$\Vert v_k - u^* \Vert_{\L^1} \leq \rho_k$ for every $k \in \N$. By optimality of~$u^*$ and~$u^*_k$, we have~$ \KK (u^*) \leq \KK (u^*_k) \leq \KK (v_k) = \KK^R (v_k)$ for every~$k \in \N$. The latter equality follows from~$\Vert x(\cdot,v_k) \Vert_\C \leq R$ and~$\Vert v_k \Vert_{\L^\infty} \leq R$ since~$v_k \in \UUadm^{\T_k} \subset \UUadm$ for every~$k \in \N$. By continuity of~$\KK^R$ in~$\L^1$-norm (see Proposition~\ref{propinputoutputR}), we get by taking the limit that~$ \KK (u^*) \leq  \int_0^T  (L(x(s,\overline{u}),\overline{u}(s),\cdot) + \overline{\gamma}(s)) \, ds \leq \KK^R(u^*) = \KK (u^*)$. The latter equality follows from~$\Vert x(\cdot,u^*) \Vert_\C \leq R$ and~$\Vert u^* \Vert_{\L^\infty} \leq R$ since~$u^* \in \UUadm$. We finally get that~$\KK (\overline{u}) = \int_0^T  L(x(s,\overline{u}),\overline{u}(s),s) \, ds \leq \int_0^T ( L(x(s,\overline{u}),\overline{u}(s),\cdot) + \overline{\gamma}(s) ) \, ds \leq \KK (u^*)$. By optimality of~$u^* \in \UUadm$ and since $\overline{u} \in \UUadm$, we infer that~$\overline{u}$ is a solution to~\eqref{theproblem} and thus~$\overline{u} = u^*$ by uniqueness. Moreover we  have also proved that~$\overline{\gamma}$ is the null function. 

\subsection{Proof of the second part of Theorem~\ref{thmcv}}\label{appforte}

By the Pontryagin maximum principle obtained in~\cite{BT2015,BT2016}, the solution~$(x^*_\T,u^*_\T)$ to~\eqref{thesampledproblem} has a $\T$-averaged weak extremal lift~$(p_\T,p^0_\T)$ for every~$\T \in \PP$ satisfying~$\Vert \T \Vert \leq \delta$. The proof is divided in three steps.

\paragraph{First step} Let us prove that there exists~$0 < \delta' \leq \delta$ such that~$p^0_{\T} \neq 0$ for every~$\T \in \PP$ satisfying~$\Vert \T \Vert \leq \delta'$. By contradiction, assume that there exists a sequence~$(\T_k)_{k \in \N}$ in~$\PP$ satisfying~$\Vert \T_k \Vert \leq \delta$ for every~$k \in \N$ and~$\lim_{k \to \infty} \Vert \T_k \Vert = 0$, such that~$p^0_{\T_k} = 0$ for every~$k \in \N$. To get a contradiction we will prove that $(x^*,u^*)$ has an abnormal weak extremal lift.

In what follows, for the ease of notations, we denote~$u^*_k := u^*_{\T_k}$,~$x^*_k := x^*_{\T_k} = x(\cdot,u^*_k)$,~$p_k := p_{\T_k}$ and~$p^0_k := p^0_{\T_k}$ for every~$k \in \N$. By Lemma~\ref{lemtech1} applied with~$d = (n \times n) + (n \times m) + n + 1 + n + m + 1$, $F=F_\nabla$ and~$\Lambda = \Gamma$, we get that~$(u^*_k)_{k \in \N}$ has a subsequence (that we do not relabel) such that~$x^*_k$ converges uniformly on~$[0,T]$ to~$x(\cdot,\overline{u})$,~$f(x^*_k,u^*_k,\cdot)$ converges weakly-star in~$\L^\infty([0,T],\R^n)$ to~$f(x(\cdot,\overline{u}),\overline{u},\cdot)$ and~$F_\nabla(x^*_k,u^*_k,\cdot)$ converges weakly-star in~$\L^\infty([0,T],\R^d)$ to~$F_\nabla(x(\cdot,\overline{u}),\overline{u},\cdot) + \overline{\gamma}$, where~$\overline{u} \in \UUadm$ and~$\overline{\gamma} \in \L^\infty([0,T],\Gamma)$. As in the proof of the first part of Theorem~\ref{thmcv}, we prove that~$\overline{u} = u^*$ (and that the fourth component~$\overline{\gamma}_4$ of~$\overline{\gamma}$ is the null function). 

Let $v \in \L^\infty_\U$ and $v_k := v^{\T_k} \in \PC^{\T_k}_\U $ for every~$k \in \N$ (see~\eqref{eqaveraged} and Lemma~\ref{lem2}) and recall that the sequence~$(v_k)_{k \in \N}$ converges to $v$ in~$\L^1$ (see Lemma~\ref{lem1}). Since $p^0_{k} = 0$, it follows from Lemma~\ref{lemcharactsampledextremal} that
$$ \langle p_k(T) , w(T,u^*_k,v_k-u^*_k)\rangle_{n} \leq 0  $$
for every~$k \in \N$, where~$w$ is defined in Proposition~\ref{propinputoutput}. Since~$p^0_{k} = 0$ we have that~$p_k(T) \neq 0_{\R^n}$ for every~$k \in \N$ (see Remark~\ref{rempermanent}). Up to a subsequence (that we do not relabel), the sequence~$(\frac{p_k(T)}{\Vert p_k(T) \Vert_{\R^n}})_{k \in \N}$ converges to some~$\theta \in \R^n \bs \{ 0_{\R^n} \}$. Thus, dividing the above inequality by~$\Vert p_k(T) \Vert_{\R^n}$ and passing to the limit (using in particular the weak-star convergences and Proposition~\ref{propconvlineardiffeq}), we get that 
$$ \langle \theta , w(T,u^*,v-u^*) \rangle_{n} \leq 0. $$ 
Defining $q^0 := 0$ and $q(t) := \Phi_{u^*}(T,t)^\top \theta$ for every~$t \in [0,T]$ (where~$\Phi_{u^*}(\cdot,\cdot)$ is the state-transition matrix of~$\nabla_x f (x^*,u^*,\cdot) \in \L^\infty([0,T],\R^{n \times n})$), we get that~$(q,q^0) \in \AC \times \R_-$ is a nontrivial pair satisfying the adjoint equation~\eqref{eqAE} associated with~$(x^*,u^*)$. Since the latter inequality is satisfied for every~$v \in \L^\infty_\U$, we get from Lemma~\ref{lemcharactweakextremal} that~$(q,q^0)$ is an abnormal weak extremal lift of~$(x^*,u^*)$, which raises a contradiction.

\paragraph{Second step} By the first step, we renormalize the extremal lifts so that $p^0 = p^0_{\T} = -1$ for every~$\T \in \PP$ satisfying~$\Vert \T \Vert \leq \delta'$. Let us prove that there exists~$0 < \delta'' \leq \delta'$ such that~$\Vert p_\T (T) \Vert_{\R^n}$ is bounded for every~$\T \in \PP$ satisfying~$\Vert \T \Vert \leq \delta''$. By contradiction, assume that there exists a sequence~$(\T_k)_{k \in \N}$ in $\PP$ satisfying~$\Vert \T_k \Vert \leq \delta'$ for every~$k \in \N$ and~$\lim_{k \to \infty} \Vert \T_k \Vert = 0$, such that~$\lim_{k \to \infty} \Vert p_{\T_k} (T) \Vert_{\R^n} = +\infty$. Without loss of generality we assume that $p_{\T_k} (T) \neq 0_{\R^n}$ for every~$k \in \N$. To raise a contradiction we will prove that~$(x^*,u^*)$ has an abnormal weak extremal lift.

In what follows, for the ease of notations, we denote~$u^*_k := u^*_{\T_k}$,~$x^*_k := x^*_{\T_k} = x(\cdot,u^*_k)$,~$p_k := p_{\T_k}$ and~$p^0_k := p^0_{\T_k}$ for every~$k \in \N$. By Lemma~\ref{lemtech1} applied with~$d = (n \times n) + (n \times m) + n + 1 + n + m + 1$, $F=F_\nabla$ and~$\Lambda = \Gamma$, we get that~$(u^*_k)_{k \in \N}$ has a subsequence (that we do not relabel) such that~$x^*_k$ converges uniformly on~$[0,T]$ to~$x(\cdot,\overline{u})$,~$f(x^*_k,u^*_k,\cdot)$ converges weakly-star in~$\L^\infty([0,T],\R^n)$ to~$f(x(\cdot,\overline{u}),\overline{u},\cdot)$ and~$F_\nabla(x^*_k,u^*_k,\cdot)$ converges weakly-star in~$\L^\infty([0,T],\R^d)$ to~$F_\nabla(x(\cdot,\overline{u}),\overline{u},\cdot) + \overline{\gamma}$, where~$\overline{u} \in \UUadm$ and~$\overline{\gamma} \in \L^\infty([0,T],\Gamma)$. As in the proof of the first part of Theorem~\ref{thmcv}, we prove that~$\overline{u} = u^*$ (and that the fourth component~$\overline{\gamma}_4$ of~$\overline{\gamma}$ is the null function). 

As in the first step, let $v \in \L^\infty_\U$ and $v_k := v^{\T_k} \in \PC^{\T_k}_\U $ for every~$k \in \N$. Recalling that $p^0_{k} = -1$, we infer from Lemma~\ref{lemcharactsampledextremal} that
$$ \langle p_k(T) , w(T,u^*_k,v_k-u^*_k) \rangle_{n} - w^0(T,u^*_k,v_k-u^*_k) \leq 0  $$
for every~$k \in \N$, where $w$, $w^0$ are defined in Proposition~\ref{propinputoutput}. Up to a subsequence (that we do not relabel), the sequence~$(\frac{p_k(T)}{\Vert p_k(T) \Vert_{\R^n}})_{k \in \N}$ converges to some~$\theta \in \R^n \bs \{ 0_{\R^n} \}$. Dividing the latter inequality by~$\Vert p_k(T) \Vert_{\R^n}$ (which converges to~$+\infty$) and taking the limit (using in particular the weak-star convergences and Proposition~\ref{propconvlineardiffeq}), we get that 
$$ \langle \theta , w(T,u^*,v-u^*) \rangle_{n} \leq 0. $$
We have used the fact that~$w^0(T,u^*_k,v_k-u^*_k)$ converges to~$w^0(T,u^*,v-u^*)-\int_0^T \overline{\gamma}_7(s) \; ds$, where $\overline{\gamma}_7$ is the seventh component of~$\overline{\gamma}$. This convergence easily follows from the weak-star convergences. Finally, since the latter inequality is satisfied for every~$v \in \L^\infty_\U$, we get a contradiction as in the first step.

\paragraph{Third step} 
As in the proof of the first part of Theorem~\ref{thmcv}, it suffices to prove that~(iii) is satisfied for at least one subsequence of any sequence~$(\T_k)_{k \in \N}$ in $\PP$ satisfying~$\Vert \T_k \Vert_{\PP} \leq \delta''$ for every~$k \in \N$ and~$\lim_{k \to \infty} \Vert \T_k \Vert = 0$. Let~$(\T_k)_{k \in \N}$ be such a sequence.
In what follows, for the ease of notations, we denote~$u^*_k := u^*_{\T_k}$,~$x^*_k := x^*_{\T_k} = x(\cdot,u^*_k)$,~$p_k := p_{\T_k}$ and~$p^0_k := p^0_{\T_k}$ for every~$k \in \N$. By Lemma~\ref{lemtech1} applied with~$d = (n \times n) + (n \times m) + n + 1 + n + m + 1$, $F=F_\nabla$ and~$\Lambda = \Gamma$, we get that~$(u^*_k)_{k \in \N}$ has a subsequence (that we do not relabel) such that~$x^*_k$ converges uniformly on~$[0,T]$ to~$x(\cdot,\overline{u})$,~$f(x^*_k,u^*_k,\cdot)$ converges weakly-star in~$\L^\infty([0,T],\R^n)$ to~$f(x(\cdot,\overline{u}),\overline{u},\cdot)$ and~$F_\nabla(x^*_k,u^*_k,\cdot)$ converges weakly-star in~$\L^\infty([0,T],\R^d)$ to~$F_\nabla(x(\cdot,\overline{u}),\overline{u},\cdot) + \overline{\gamma}$, where~$\overline{u} \in \UUadm$ and~$\overline{\gamma} \in \L^\infty([0,T],\Gamma)$. As in the proof of the first part of Theorem~\ref{thmcv}, we prove that~$\overline{u} = u^*$ (and that the fourth component~$\overline{\gamma}_4$ of~$\overline{\gamma}$ is the null function).

From the first and second steps, $p^0_{k}=-1$ for every~$k \in \N$ and the sequence~$(p_k(T))_{k \in \N}$ converges, up to a subsequence (that we do not relabel), to some~$\Psi \in \R^n$. By the weak-star convergences and the backward version of Proposition~\ref{propconvlineardiffeq}, the sequence~$(p_k)_{k \in \N}$ converges uniformly on~$[0,T]$ to the function $q \in \AC([0,T],\R^n)$ that is the unique solution to the (backward) linear Cauchy problem given by
\begin{equation*}
\left\lbrace
\begin{array}{ll}
\!\! \dot{q}(t) = - \nabla_x f(x^*(t),u^*(t),t)^\top q(t)  + \nabla_x L(x^*(t),u^*(t),t)  \\[3pt]
\!\! q(T) = \Psi. &
\end{array}
\right.
\end{equation*}
Defining~$q^0 = -1$, we get that~$(q,q^0) \in \AC \times \R_-$ is a nontrivial pair satisfying the adjoint equation~\eqref{eqAE} associated with~$(x^*,u^*)$.

Let us prove that~$(q,q^0)$ is a weak extremal lift of~$(x^*,u^*)$. As in the first and second steps, let~$v \in \L^\infty_\U$ and $v_k := v^{\T_k} \in \PC^{\T_k}_\U $ for every~$k \in \N$. Recalling that $p^0_{k} = -1$, we infer from Lemma~\ref{lemcharactsampledextremal} that
$$ \langle p_k(T) , w(T,u^*_k,v_k-u^*_k) \rangle_{n} - w^0(T,u^*_k,v_k-u^*_k) \leq 0  $$
for every~$k \in \N$, where $w$, $w^0$ are defined in Proposition~\ref{propinputoutput}. Taking the limit in the above inequality (using in particular the weak-star convergences and Proposition~\ref{propconvlineardiffeq}), we get that 
\begin{multline*}
\langle q(T) , w(T,u^*,v-u^*) \rangle_{n} + q^0 w^0(T,u^*,v-u^*) \\[3pt]
\leq \langle q(T) , w(T,u^*,v-u^*) \rangle_{n} 
+ q^0 \bigg( w^0(T,u^*,v-u^*) -\int_0^T \overline{\gamma}_7(s) \; ds \bigg) \leq 0. $$
\end{multline*}
As in the second step, we have used the fact that~$w^0(T,u^*_k,v_k-u^*_k)$ converges to~$w^0(T,u^*,v-u^*)-\int_0^T \overline{\gamma}_7(s) \; ds$, where $\overline{\gamma}_7$ is the seventh component of~$\overline{\gamma}$ which has nonnegative values. Since the latter inequality is satisfied for every~$v \in \L^\infty_\U$, we infer from Lemma~\ref{lemcharactweakextremal} that~$(q,q^0)$ is a weak extremal lift of~$(x^*,u^*)$. By uniqueness we get that~$q=p$.



\bibliographystyle{plain}
\bibliography{bibIEEEbourdintrelat}

%
%

\end{document}